\journal{---}
\newtheorem{theorem}{Theorem}[section]
\newtheorem{lemma}[theorem]{Lemma}
\newtheorem{proposition}[theorem]{Proposition}
\newtheorem{corollary}[theorem]{Corollary}
\newtheorem{definition*}{Definition}
\newtheorem*{problem*}{Problem}
\theoremstyle{definition}
\newtheorem{example}{Example}
\newtheorem{remark}{\textsc{Remark}}
\renewcommand\epsilon{\varepsilon}
\renewcommand\ss{\mathfrak{s}}
\renewcommand\tt{\mathfrak{t}}
\providecommand\llb{\llbracket}
\providecommand\rrb{\rrbracket}
\providecommand\gonc{Gon\v{c}arov{ }}
\providecommand{\dd}{\mathfrak{d}}
\providecommand{\LL}{\mathfrak{L}}
\providecommand{\EE}{\mathcal{E}}
\providecommand{\NN}{\mathbb{N}}
\providecommand{\bbx}{\mathbf{x}}
\newcommand{\fixed}[2][1]{%
  \begingroup
  \spaceskip=#1\fontdimen2\font minus \fontdimen4\font
  \xspaceskip=0pt\relax
  #2%
  \endgroup
}
\begin{document}
\begin{frontmatter}

\title{Multivariate Delta Gon\v{c}arov and Abel Polynomials}

\author{Rudolph Lorentz} 
\address{Department of Mathematics, Texas A\&M University at Qatar \\ PO Box 23874 Doha, Qatar}
\ead{rudolph.lorentz@qatar.tamu.edu}

\author{Salvatore Tringali\fnref{myfootnote}}
\address{Department of Mathematics, Texas A\&M University at Qatar \\ PO Box 23874 Doha, Qatar}
\fntext[myfootnote]{Current address: Institute for Mathematics and Scientific Computing, University of Graz,  Heinrichstr. 36, 8010 Graz, Austria.}
\ead{salvatore.tringali@uni-graz.at}

\author{Catherine H. Yan\corref{mycorrespondingauthor}}
\address{Department of Mathematics, Texas A\&M University \\ 77845-3368 College Station, TX.  USA}
\cortext[mycorrespondingauthor]{Corresponding author}
\ead{cyan@math.tamu.edu}

%

\begin{abstract}
Classical Gon\v{c}arov polynomials are polynomials which interpolate derivatives.
Delta Gon\v{c}arov polynomials are polynomials which interpolate
delta operators, e.g., forward and backward difference operators. We extend fundamental aspects of the theory of classical bivariate
Gon\v{c}arov polynomials and univariate delta Gon\v{c}arov polynomials
to the multivariate
setting using umbral calculus.
After introducing systems of delta operators,
we define multivariate delta Gon\v{c}arov polynomials,
show that the associated interpolation problem
is always solvable, and derive a generating function (an Appell
relation) for them. We show that systems of delta Gon\v{c}arov polynomials
on an interpolation grid $Z \subseteq \mathbb{R}^d$ are of binomial type if and only if  $Z = A\mathbb{N}^d$ for some $d\times d$ matrix $A$. This motivates
our definition of delta Abel polynomials to be exactly those delta Gon\v{c}arov
polynomials which are based on such a grid. Finally, compact formulas
for delta Abel polynomials
in all dimensions are given for separable systems of delta operators.
This recovers a former result for classical bivariate Abel polynomials and
extends previous partial results for classical trivariate Abel polynomials
to all dimensions.
\end{abstract}

\begin{keyword}
Abel and Gon\v{c}arov polynomials \sep Appell relations \sep
 delta operators \sep  interpolation \sep  umbral calculus.

 \MSC[2010] 41A05 \sep  05A40  \sep  33C45 \sep 41A10.
\end{keyword}


\end{frontmatter}



%
%
%
\section{Introduction}
\label{sec:intro}
The main purpose of this paper is to extend some fundamental aspects of the theory of Gon\v{c}arov
and Abel polynomials to higher dimensions and to replace
partial derivatives by systems of delta operators.

Such topics have an extensive history.
In 1881, Abel \cite{abel} introduced a sequence $g_0, g_1, g_2, \ldots$ of polynomials, now carrying his name, to
represent analytic functions. These polynomials are determined by the condition
that they interpolate the derivatives of \textit{any} given analytic function $f: \mathbb R \to \mathbb R$ at the nodes of an arithmetic progression
 through the formula
\begin{equation}
f(x) = \sum_{n=0}^\infty \frac{g_n(x)}{n!} f^{(n)}(nb),
\end{equation}
where $b \in \mathbb R$ is a fixed parameter.
In particular, it can be shown that $g_n(x) = x(x-nb)^{n-1}$ for every $n$, and $g_n$
satisfies the orthogonality condition
$$
g_n^{(k)}(kb) = n!\delta_{k,n}\qquad \text{for all }k.
$$

Abel polynomials count some basic combinatorial objects, for example,
 labeled trees and (generalized) parking functions \cite{[Yan14]}.
  The sequence $\{ g_n(x)\}_{n \geq 0}$  is  of binomial type,
  which leads to
a connection with umbral calculus (or finite operator calculus),
a branch of mathematics that studies analytic and algebraic-combinatorial properties of polynomials by a systematic use of  operator methods.

The (classical) umbral calculus of polynomials in one variable was put onto a firm theoretical basis by
Rota et al.~in a series of papers  \cite{Mullin-Rota, RoKaOd73, Rota}. It was extended
to multivariate polynomials and applied to combinatorial problems
in \cite{Garsia, Niederhausen, parrish, Reiner}. In \cite{Garsia-Joni, Joni78}
and \cite{Verde-Star}, higher dimensional umbral calculus is used to derive
various versions of the Lagrange inversion formula.

The operators considered in umbral calculus are delta
operators, a family of linear operators, acting on the algebra of univariate polynomials with coefficients in a field. Delta operators  share many properties in common with derivatives.    Each delta operator $\dd$  is uniquely associated with a sequence $\{p_n\}$ of polynomials of binomial type,
which interpolates the iterates of $\dd$, evaluated at $0$, as
$$
f(x)= \sum_{n=0}^\infty \frac{p_n(x)}{n!}  [\dd^n f(x)]_{x=0}.
$$

In \cite{Gonc30,[Gon]}, Gon\v{c}arov allowed the interpolation grid to be arbitrary,
obtaining that
\begin{equation}
f(x) = \sum_{n=0}^\infty g_n(x; a_0, a_1,\ldots , a_n) f^{(n)}(a_n),
\end{equation}
where $a_n\in \mathbb{R}$ and $g_n(x; a_0, a_1, \ldots, a_n) $
are the Gon\v{c}arov polynomials.
Such polynomials
have an extensive history in numerical analysis, even for interpolation
of derivatives \cite{Lo92}. Gon\v{c}arov polynomials
in two or more variables  have the unusual property that the interpolation
problem is solvable for any choice of the nodes of interpolation.
The uniqueness was  shown
in \cite{GloRlo} for the bivariate case and in \cite{JiaSharma} for
the multivariate case.

It is well known that certain values of (univariate) Gon\v{c}arov polynomials
are connected with order statistics (e.g., see \cite{[Kung]}).
 This connection  has been further developed \cite{[KY03]} into a complete correspondence between  Gon\v{c}arov polynomials and parking functions, a discrete structure lying at the heart of combinatorics.
In \cite{[KSY]},  difference Gon\v{c}arov polynomials were studied. Since difference operators are delta operators, this was another connection with umbral calculus.

Our basic goal here is to  extend the theory of classical
multivariate  Gon\v{c}arov polynomials by using umbral calculus and replacing partial derivatives with \textit{delta operators}.
This is a further development of our previous work \cite{[Khare-lorentz-yan],
 [Lorentz-yan]}  on  the analytic and combinatorial properties of  bivariate Gon\v{c}arov polynomials  and \cite{[Lorentz-tringali-yan]} on the
interpolation with general univariate delta operators.
In addition,
 the theory of generating functions, polynomial recursion
and approximation theory (the latter being Abel's original motivation
for studying his Abel polynomials), just to mention a few,
all play a role here.

The rest of the paper is organized as follows.
Section 2 contains  the  definition  and basic properties of a system of delta operators in $d$ variables.
In Section 3 we define the multivariate Gon\v{c}arov polynomials associated with a system of delta operators and an interpolation grid $Z$,
derive a  generating function (Appell relation), and characterize
the set of delta Abel polynomials, which are multivariate  Gon\v{c}arov polynomials of binomial type associated with delta operators. In the last two sections, we present  closed formulas for multivariate delta Abel polynomials in the special case when these are associated with a \textit{separable} system of delta operators. In particular, Section 4 deals with the
 bivariate case, and  Section 5 contains the general formulas in  an arbitrary dimension.

\section{Systems of  delta operators} 
%

A univariate delta operator $\dd$ is defined as a linear operator on the space of univariate polynomials in one variable $x$ that is
shift-invariant and for which $\dd(x)$ is a non-zero constant. In the multivariate case,
we need to consider \textit{systems of delta operators}.  To state the definition,
 we first give  some preliminary notations  using  multi-indices.

Let $d$ be a fixed integer $\ge 1$ (the space dimension).
For a vector $\mathbf{v} \in \mathbb{R}^d$,
we denote by $v_j$ the $j$-th component of $\mathbf{v}$. We write the vector of
space variables as $\mathbf{x} = (x_1, x_2, \ldots, x_d)$, and we do the same with the vectors of $\mathbb R^d$. In particular,
for $\mathbf{y}, \mathbf{z} \in \mathbb{R}^d$ we let $\mathbf{y} \cdot \mathbf{z} =\sum_{i=1}^d y_iz_i$, and we use $\mathbf{0}$ for the zero vector of $\mathbb R^d$.

Given $\mathbf{n} = (n_1, n_2, \ldots, n_d) \in \NN^d$, we set
$\mathbf{n}!= n_1 ! n_2 ! \cdots n_d !$, $| \mathbf{n}|=\sum_{i=1}^d n_i$, and
 ${\bbx}^{\mathbf{n}}=x_1^{n_1} x_2^{n_2} \cdots x_d^{n_d}$ (typically, the vectors of $\NN^d$ will serve as indices for the Gon\v{c}arov polynomials we will
work with).
 For $\mathbf{k}, \mathbf{n} \in \mathbb{N}^d$,
 $\mathbf{k} \leq \mathbf{n}$ means $k_i \leq n_i$ for all $i$, and
 $\binom{\mathbf{n}}{\mathbf{k}} = \binom{n_1}{k_1} \cdots \binom{n_d}{k_d}$.

Let $\mathbb{R}[\mathbf{x}]$ and $\mathbb{R}\llb \mathbf{x} \rrb$ be,
respectively, the algebra of polynomials (of finite degree) and the algebra of formal power
series in the variables $x_1, x_2, \ldots, x_d$ with coefficients in $\mathbb{R}$.
The (total) degree of a multivariate polynomial $p(\mathbf{x})$ is the maximum of $|\mathbf{n}|$ for all $\mathbf n \in \NN^d$ such that $\mathbf{x}^\mathbf{n}$ appears in $p(\mathbf{x})$ with a non-zero coefficient, while by the \textit{(relative) degree} of $p(\mathbf{x})$ \textit{with respect to $x_i$}
we mean the highest power $m$ for which $x_i^m$ is a factor of a monomial $\mathbf x^{\bf n}$
appearing in $p(\mathbf x)$ with a non-zero coefficient.


We denote by $\EE_{\mathbf{v}}$, for each $\mathbf{v} \in \mathbb{R}^d$, the shift operator $\mathbb{R}[\mathbf{x}]
\to \mathbb{R}[\mathbf{x}]: f(\mathbf{x}) \mapsto f(\mathbf{x}+\mathbf{v})$.
We say that a linear
operator $\mathfrak{L}: \mathbb{R}[\mathbf{x}] \to \mathbb{R}[\bbx]$  is
shift-invariant if $\mathfrak{L}\EE_{\mathbf{v}} =
\EE_{\mathbf{v}} \mathfrak{L}$ for every $\mathbf{v} \in \mathbb{R}^d$.
In particular, $\EE_\mathbf{v}$ is shift-invariant.

%
\begin{definition*} \label{def:system}
Let $\dd_1, \dd_2, \ldots, \dd_d$  be shift-invariant  linear operators on $\mathbb{R}[\bbx]$.
Then $(\dd_1, \dd_2, \ldots, \dd_d)$ is a \emph{system  of delta operators}  if for
any linear polynomial $p(\bbx) = a_1x_1+a_2 x_2 + \cdots +a_d x_d$, we have that
 $\dd_i p(x)=c_i$ for every $i = 1, \ldots, d$, where $c_i \in \mathbb R$ is a constant depending only on $i$, and at least one of  $c_1, c_2, \dots, c_d$ is not $0$.
\end{definition*}
It is shown by Parrish \cite[Theorem 4.1]{parrish} that every shift-invariant linear operator $\LL$ on
$\mathbb{R}[\mathbf{x}]$ is a formal power series in  $D_1, D_2, \dots, D_d$,  where
$D_i=D_{x_i}$ is the partial derivative with respect to the variable $x_i$.
To wit, we can write
\begin{equation}
\label{equ:Parrish_representation}
\LL =  \sum_{\mathbf{n} \in \mathbb{N}^d}  a_{\mathbf{n}} D_1^{n_1} D_2^{n_2} \cdots D_d^{n_d}.
\end{equation}
We refer to the formal power series
$$
 f(\mathbf{x})= \sum_{\mathbf{n} \in \mathbb{N}^d}  a_{\mathbf{n}} \mathbf{x}^\mathbf{n}
$$
as the indicator of $\LL$, and we write $\LL=f(D_1, D_2, \ldots, D_d)$. With this notation in hand, we have the following:
\begin{theorem}\label{admissible}
Let $\dd_1,\dd_2, \dots, \dd_d$ be shift-invariant
linear operators with indicators $f_1(\bbx)$, $f_2(\bbx), \dots$, $f_d(\bbx)$.
Assume
\begin{equation*}
 f_i(\bbx) = \sum_{\mathbf{n} \in \mathbb{N}^d}  a^{(i)}_{\mathbf{n}} \mathbf{x}^\mathbf{n}.
\end{equation*}
We form the $d \times d$ matrix  $J_f=(m_{i,j})$ by letting $m_{i,j}$ be the constant term of the formal power series $\partial f_i/\partial x_j$.
Then $(\dd_1, \dd_2, \ldots, \dd_d)$ is a system of delta operators if and only if
$\det J_f \neq 0$ and $a^{(i)}_{\mathbf{0}}= 0$ for all $i$.
\end{theorem}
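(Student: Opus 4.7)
The plan is to expand each $\dd_i$ via Parrish's representation~\eqref{equ:Parrish_representation} and to directly compute its action on an arbitrary linear polynomial, then read off what the two clauses in the definition of a system of delta operators become in terms of the data of the $f_i$'s. Since $D^{\mathbf n} x_j$ vanishes for $|\mathbf n| \ge 2$, the sum $\sum_{\mathbf n} a^{(i)}_{\mathbf n} D^{\mathbf n}$ applied to $x_j$ collapses to only two surviving terms (those with $\mathbf n = \mathbf 0$ and with $\mathbf n = e_j$), giving
$$
\dd_i x_j \;=\; a^{(i)}_{\mathbf 0}\, x_j + a^{(i)}_{e_j}.
$$
Observing that the constant term of $\partial f_i/\partial x_j$ is precisely $a^{(i)}_{e_j}$, so that $m_{i,j} = a^{(i)}_{e_j}$, linearity extends this to an arbitrary linear polynomial $p(\bbx) = \sum_j a_j x_j$ as
$$
\dd_i p(\bbx) \;=\; a^{(i)}_{\mathbf 0}\, p(\bbx) + (J_f \mathbf a)_i,
\qquad \mathbf a = (a_1,\dots,a_d)^{\!\top}.
$$

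For the forward implication, assume $(\dd_1,\ldots,\dd_d)$ is a system of delta operators. The requirement that $\dd_i p$ be a constant in $\bbx$ for every linear $p$ immediately forces $a^{(i)}_{\mathbf 0} = 0$, since otherwise the term $a^{(i)}_{\mathbf 0} p(\bbx)$ would contribute a nonconstant multiple of $p$. The formula then reduces to $\dd_i p = (J_f \mathbf a)_i$, and reading the nondegeneracy clause as the natural multivariate analogue of ``$\dd(x)\neq 0$'' in the univariate case — namely, that for every nonzero $\mathbf a$ at least one coordinate of $J_f \mathbf a$ is nonzero — is precisely the statement that $\mathbf a \mapsto J_f \mathbf a$ has trivial kernel, i.e., $\det J_f \neq 0$.

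For the converse, assume $a^{(i)}_{\mathbf 0} = 0$ for every $i$ and $\det J_f \neq 0$. The displayed formula collapses to $\dd_i p = (J_f \mathbf a)_i$, which is a constant in $\bbx$; and invertibility of $J_f$ ensures that when $p \neq 0$ (i.e., $\mathbf a \neq \mathbf 0$) the vector $J_f \mathbf a$ is nonzero, so at least one $\dd_i p$ is nonzero, verifying both clauses of the definition.

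The argument is almost mechanical once Parrish's representation is in hand; the only real subtlety is disambiguating the nondegeneracy clause in the definition, but the interpretation above (injectivity of the linear map encoded by $J_f$) is the only one consistent with both the univariate precedent and the conclusion $\det J_f \neq 0$.
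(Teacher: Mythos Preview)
Your argument is correct and follows essentially the same route as the paper: both apply Parrish's representation to a linear polynomial, obtain the formula $\dd_i p(\bbx) = a^{(i)}_{\mathbf 0}\, p(\bbx) + (J_f\mathbf a)_i$, and then read off that constancy forces $a^{(i)}_{\mathbf 0}=0$ while the nondegeneracy clause is equivalent to $\ker J_f = \{\mathbf 0\}$. Your explicit remark on how the nondegeneracy clause should be interpreted is a welcome clarification, but mathematically the two proofs are the same.
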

\begin{proof}
Assume $(\dd_1,\dd_2,\ldots,\dd_d)$ is a system of delta operators, and
let $p(\mathbf{x}) = p_0 + \sum_{j=1}^d p_j x_j$, where $p_0, p_1, \ldots, p_d \in \mathbb R$.
The representation of delta operators provided by Eq.~\eqref{equ:Parrish_representation} yields that
\begin{equation} \label{dd-indicator}
\dd_i p(\mathbf{x}) = a^{(i)}_{\mathbf{0}}p(\mathbf{x}) + \sum_{j=1}^d m_{i,j} p_j \qquad (i = 1, \ldots, d).
\end{equation}
Also, $\dd_i f(\mathbf{x})$ being a constant for every linear polynomial $f(\mathbf{x})$ implies, for each $i$,
that $a^{(i)}_{\mathbf{0}}= 0$, and hence $\dd_i p(\mathbf{x})=\sum_{j=1}^d m_{i,j} p_j$.
But then, the condition that $\dd_i p(\mathbf{x}) \ne 0$ for some $i$ means that
there is no non-trivial solution to the linear system $ J_f \mathbf{v} =\mathbf{0}$. It follows that
$\det J_f \neq 0$.

Conversely, if $a_\mathbf{0}^{(i)} = 0$, then $\dd_i p(\mathbf{x})=\sum_{j=1}^d m_{i,j} p_j \in \mathbb{R}$ for all $i$.
And since $\det J_f \neq 0$, we have that  $J_f v \neq \mathbf{0}$ for any
non-zero vector  $\mathbf{v}= (p_1, p_2, \ldots, p_d) \in \mathbb{R}^d$.   Thus  any linear polynomial
of the form $p_0+\sum_{j=1}^d p_j x_j$
satisfies $\dd_i p(\mathbf x) \neq 0$ for some $i$.
\end{proof}	
\begin{definition*}
A set of polynomials $\{p_{\mathbf{n}}(\bbx)\}_{\mathbf{n} \in \mathbb{N}^d}$ is said to be a basic sequence of a system of delta operators $(\dd_1, \dd_2, \ldots, \dd_d)$ if:
\begin{enumerate}[label={\rm (\arabic{*})}]
\item the degree of $p_{\mathbf{n}}(\mathbf{x})$ is $|\mathbf{n}|$ for every $\mathbf n \in \mathbb N^d$;
\item
$p_{\mathbf{0}}(\mathbf{x}) = 1$ and
$p_{\mathbf{n}}(\mathbf{0}) = 0$ for all $\mathbf{n} \in \mathbb{N}^d$  with $|\mathbf{n}| \geq 1$;
\item
$\dd_i (p_{\mathbf{n}}) = n_i p_{\mathbf{n}-\mathbf{e}_i}$ for all
$\mathbf{n} \in \mathbb{N}^d$ and $1 \leq i \leq d$, where $\mathbf{e}_i$ is the $i$-th standard basis
vector $(0,\dots, 1, \dots, 0)$.
\end{enumerate}
\end{definition*}
From Parrish \cite{parrish},   each system  of delta operators
has a basic sequence (called normalized shift basis  in \cite{parrish}).
Moreover, a sequence of polynomials $\{ p_{\mathbf{n}}(\bbx) \}_{\mathbf{n} \in \mathbb{N}^d} $
is the basic sequence  of some system of delta operators if and only if it satisfies
the binomial identity
$$
p_{\mathbf{n}} (\mathbf{x} + \mathbf{y}) =\sum_{ \mathbf{k} \leq \mathbf{n}}
\binom{\mathbf{n}}{\mathbf{k}} p_{\mathbf{k}}(\mathbf{x}) p_{\mathbf{n}-\mathbf{k}} (\mathbf{y})
\qquad \text{ for all } \mathbf{n} \in \mathbb{N}^d.
$$
Notice that,
for each $m \in \mathbb{N}$, the set $\{ p_{\mathbf{n}}(\mathbf x): |\mathbf{n}| \leq m\}$ is a basis of the vector space of polynomials of $\mathbb R[\mathbf x]$ of degree at most $m$.

Univariate delta operators reduce the degree of a polynomial by one.
The next definition gives a generalization of  this property.
\begin{definition*} We say that the shift-invariant linear operators
$(\dd_1 , \dd_2, \dots, \dd_d)$ form  a \emph{strict system  of delta operators} if $\dd_i$ is degree reducing for each $i$, in the sense that for any polynomial  $p(\bbx)$ of degree $m$ with respect to $x_i$,
the degree of $\dd_i p $ with respect to $x_i$ is $m-1$ if $m \ge 1$, otherwise  $\dd_i(p) = 0$.
\end{definition*}
Strict systems of delta operators can be characterized as follows.

\begin{theorem}\label{strict-delta}
A system of shift-invariant delta operators $(\dd_1 , \dd_2, \dots, \dd_d)$ is a strict system if and only if
for each $i$,
the indicator  $f_i(\mathbf{x})$ of $\dd_i$ can be written as
$f_i(\bbx) = x_i g_i (\bbx)$ for some formal power series $g_i(\bbx)$ with $g_i(\mathbf{0})\neq 0$.
\end{theorem}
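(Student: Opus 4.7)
The plan is to translate the strict degree-reducing condition on each $\dd_i$ into an explicit vanishing condition on the coefficients $a_{\mathbf n}^{(i)}$ of its indicator $f_i$, and to recognize that condition as exactly saying that $x_i$ divides $f_i$ with the quotient $g_i$ having non-zero constant term.

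For the forward direction, I would fix $i$ and pick any $\mathbf n \in \NN^d$ with $n_i = 0$. Since the monomial $\bbx^{\mathbf n}$ has $x_i$-degree $0$, the strict condition forces $\dd_i(\bbx^{\mathbf n}) = 0$. Expanding via the Parrish representation \eqref{equ:Parrish_representation} and reading off the coefficient of the constant monomial $\bbx^{\mathbf 0}$ isolates the single term $\mathbf n!\, a_{\mathbf n}^{(i)}$, which must therefore vanish. Hence every monomial of $f_i$ is divisible by $x_i$, so $f_i(\bbx) = x_i g_i(\bbx)$ for some $g_i \in \mathbb R \llb \bbx \rrb$. To see that $g_i(\mathbf 0) \ne 0$, one observes that $g_i(\mathbf 0) = a_{\mathbf e_i}^{(i)} = \dd_i(x_i)$ (using $a_{\mathbf 0}^{(i)} = 0$ from Theorem \ref{admissible}); applied to the polynomial $x_i$ of $x_i$-degree $1$, the strict condition requires the output to have $x_i$-degree exactly $0$, hence to be a non-zero constant.

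For the converse, assume $f_i = x_i g_i$ with $g_i(\mathbf 0) \ne 0$. Since the partial derivatives commute, the operator factors as $\dd_i = D_i \circ h_i$, where $h_i := g_i(D_1, \dots, D_d)$ is shift-invariant with non-zero constant-term coefficient $g_i(\mathbf 0)$. If $p$ does not involve $x_i$, then every monomial derivative $D^{\mathbf k} p$ with $k_i > 0$ vanishes while those with $k_i = 0$ remain independent of $x_i$, so $h_i p$ is independent of $x_i$ and $\dd_i p = D_i(h_i p) = 0$. If instead $p$ has $x_i$-degree $m \ge 1$, write $p(\bbx) = \sum_{k=0}^{m} x_i^k\, p_k(\bbx')$ with $p_m \ne 0$, where $\bbx' = (x_1, \dots, x_{i-1}, x_{i+1}, \dots, x_d)$, and split $h_i = h_i' + r_i$, where $h_i'$ is obtained from $g_i(D_1, \dots, D_d)$ by setting $D_i = 0$ and $r_i$ collects the remaining terms. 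The operator $r_i$ strictly lowers the $x_i$-degree, while $h_i'$ commutes with multiplication by $x_i$ and sends $p$ to $\sum_k x_i^k (h_i' p_k)$, so the coefficient of $x_i^m$ in $h_i p$ equals $h_i'(p_m)$. Since $h_i'$ has non-zero constant term, it is invertible on $\mathbb R[\bbx']$ -- being the sum of a scalar and an operator nilpotent on each finite-degree subspace -- and hence $h_i'(p_m) \ne 0$. Therefore $h_i p$ has $x_i$-degree exactly $m$, and $\dd_i p = D_i(h_i p)$ has $x_i$-degree exactly $m-1$.

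The main obstacle I expect lies in the leading-coefficient analysis in the converse: one must cleanly isolate which terms of the power-series operator $h_i$ can contribute to the top $x_i$-degree piece, and then invoke invertibility of a shift-invariant operator with non-zero constant term -- this time acting on the polynomial algebra $\mathbb R[\bbx']$ in one fewer variable -- to rule out unexpected cancellations and conclude $h_i'(p_m) \ne 0$.
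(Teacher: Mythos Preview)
Your proof is correct and, in both directions, arguably cleaner than the paper's.

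For the forward direction, the paper argues by contradiction: assuming some $c_{\mathbf m}$ with $m_1=0$ is nonzero, it picks the lex-minimal such $\mathbf m$ and applies $\dd_1$ to the test polynomial $x_1\cdot\bbx^{\mathbf m}$, obtaining a result of $x_1$-degree $1$ instead of $0$. You instead apply $\dd_i$ directly to $\bbx^{\mathbf n}$ with $n_i=0$, use strictness to get $\dd_i(\bbx^{\mathbf n})=0$, and read off the constant term $\mathbf n!\,a_{\mathbf n}^{(i)}$ of the Parrish expansion. This avoids the minimality bookkeeping and gets all the vanishing at once. For the nonvanishing of $g_i(\mathbf 0)=a_{\mathbf e_i}^{(i)}$, the paper tests on $x_i^2$ while you test on $x_i$; both work, yours being a touch more direct.

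For the converse, the paper simply declares it ``clear''. Your argument --- factoring $\dd_i=D_i\,h_i$, splitting $h_i$ into its $D_i$-free part $h_i'$ and the remainder $r_i$, and using that $h_i'$ has nonzero constant term and hence acts invertibly on each finite-degree piece of $\mathbb R[\bbx']$ --- supplies exactly the missing justification that the leading $x_i$-coefficient $h_i'(p_m)$ does not vanish. This is the right level of care, and the invertibility step (scalar plus locally nilpotent) is standard.
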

\begin{proof}
It is clear that if $f_i(\bbx) =  x_i g_i (\bbx)$ with $g_i(\mathbf{0})\neq 0$, then
the operator $\dd_i$ reduces the degree with respect to $x_i$ by 1.

Conversely, assume that  $(\dd_1, \dd_2, \dots, \dd_d)$   is a strict system of
 delta operators. We will show that $f_1(\bbx) =  x_1 g_1 (\bbx)$ with $g_1(\mathbf{0})\neq 0$.
 The formulas for other $i$  are similar.
Assume
$$
f_1=\sum_{\mathbf{n} \in \mathbb{N}^d} c_{\mathbf{n}} \bbx^{\mathbf{n}}.
$$
We need to show that $c_{\mathbf{n}}= 0$  whenever $n_1=0$,
 and $c_{(1,0,\dots, 0)} \neq 0$.

First, notice that $\dd_1(1)=0$, which implies $c_{\mathbf{0}}=0$.
Then, assume that $c_{\mathbf{n}}\neq  0$  for some $\mathbf{n}$ with  $n_1=0$.
Find in the set
$$
\{\mathbf{n} \in \mathbb N^d: n=(0, n_2, \dots, n_d) \text{ and  } c_{\mathbf{n}} \neq 0\}
$$
the minimal index $\mathbf{m}=(0, m_2, \dots, m_d)$ under the lexicographic order.
Apply $\dd_1 $ to $x_1 \cdot \bbx^{\mathbf{m}}$.  Note that
$D_1^{n_1} D_2^{n_2} \cdots D_d^{n_d} ( x_1 x_2^{m_2} \cdots x_d^{m_d})
$ is $0$ if $n_1 > 1$ or $n_i > m_i$ for any $2 \leq i \leq d$,   and is a polynomial with no
$x_1$ factor if $n_1=1$. With our choice of $\mathbf{m}$, we conclude that
$$
\dd_1(x_1 \cdot \bbx^{\mathbf{m}}) = c_{\mathbf{m}} m_2 !  \cdots m_d ! x_1
 + g(x_2, \dots, x_d),
 $$
  which is of degree 1 with respect to $x_1$, a contradiction.

 It remains to show that  $c_{(1,0,\dots, 0)} \neq 0$, which can be seen by applying
 $\dd_1$ to $x_1^2$.  Indeed, if $c_{(1,0,\dots, 0)} = 0$, then  $\dd_1(x_1^2)=2 c_{(2,0,\dots, 0)}$,
 so that $\dd_1$ has reduced the degree with respect to  $x_1$ by at least 2, which is a contradiction.
\end{proof}

\begin{remark}
A system of formal power series $f_1(\mathbf{x}), f_2(\mathbf{x}), \dots, f_d(\bbx)$ is called \emph{admissible}
if and only if (i) each $f_i(\bbx)$ has zero constant term and (ii)  $\det J_f \ne 0$.
(See e.g. \cite{Garsia-Joni, Joni78, Roman}.)
Hence, a system of delta operators in our definition is precisely
a system of shift-invariant linear operators whose indicators form an admissible system.
Such systems of operators have been called ``admissible differential operators'' by Joni \cite{Joni78}
and ``umbral operators'' by Garsia and Joni \cite{Garsia-Joni}.

It is known that for any admissible system $f_1(\mathbf{x}), f_2(\mathbf{x}), \dots, f_d(\bbx)$ there exists
a unique compositional inverse, i.e., an admissible system of formal power series
$g_1(\mathbf{x}), g_2(\mathbf{x}), \dots, g_d(\bbx)$
such that
$$
 f_i(g_1(\bbx), g_2(\mathbf{x}), \dots, g_d(\bbx) )=x_i\ \ \text{and}\ \
 g_i(f_1(\mathbf{x}), f_2(\mathbf{x}), \dots, f_d(\bbx))=x_i
$$
for all $i$. Consequently a system of delta operators has compositional inverse.
\end{remark}
\begin{remark}
Another  definition    of system of delta operators  is given
by Niederhausen \cite{Niederhausen}  for pairs of delta operators whose indicators are ``delta multi-series'', which is equivalent to a strict system of
delta operators defined here. Clearly, strict systems are special cases of systems of delta operators.
Conversely, not all admissible systems are strict. For example,  the pair
$(\dd_1, \dd_2)=(D_x+D_y,D_x-D_y)$ in the bivariate case  has admissible indicators, but does not form a  strict system.
\end{remark}
%
%

%
%
%
\section{Delta \texorpdfstring{Gon\v{c}arov}{Goncarov} polynomials}
In the present section we introduce delta Gon\v{c}arov polynomials starting from the bi\-or\-thog\-o\-nal\-i\-ty condition  in the Gon\v{c}arov interpolation problem. The univariate version was
discussed in  \cite{[KY03]} with the differentiation operator and
in \cite{[Lorentz-tringali-yan]} with an arbitrary delta operator. Here we extend the theory to many variables.

To this end, let $\Delta = (\dd_1, \dd_2, \dots, \dd_d)$ be a system of delta operators, which we assume as fixed for the rest of the section, and
let $\{\Phi_{\mathbf{n}}: \mathbf{n} \in \mathbb{N}^d \}$ be a set of shift-invariant operators of the form
\begin{equation}\label{Phi}
\Phi_{\mathbf{n}} = \LL_{\mathbf{n}} \dd_1^{n_1} \dd_2^{n_2} \cdots \dd_d ^{n_d},
\end{equation}
where $\LL_{\mathbf{n}}$ is a shift-invariant linear operator whose indicator  has a non-zero constant.

We say that a set of polynomials $\{q_{\mathbf{n}}(\bbx)\}_{\mathbf{n} \in \mathbb{N}^d}$ is a sequence of
polynomials if $\deg q_{\mathbf{n}} = |\mathbf n|$ for all $\mathbf{n}$. A sequence
$\{q_{\mathbf{n}}(\bbx) \}_{\mathbf{n} \in \mathbb{N}^d}$ of polynomials
is then called biorthogonal to the set
$\{\Phi_{\mathbf{n}}:  \mathbf{n} \in \mathbb{N}^d\} $ if
\begin{equation}
\epsilon(\mathbf{0}) \Phi_{\mathbf{n}}(q_{\mathbf{k}}(\bbx))  = \mathbf{n}! \delta_{\mathbf{n}, \mathbf{k}}.
\end{equation}

\begin{theorem} \label{biorthogonal}
Given a set $\{ \Phi_{\mathbf{n}}: \mathbf{n} \in \mathbb{N}^d \} $ of shift-invariant operators,
there exists  a unique sequence of polynomials that is biorthogonal to it.
\end{theorem}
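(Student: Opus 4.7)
The plan is to fix a basic sequence $\{p_{\mathbf m}\}_{\mathbf m \in \mathbb{N}^d}$ of the system $\Delta = (\mathfrak{d}_1,\ldots,\mathfrak{d}_d)$, whose existence is guaranteed by Parrish's theorem recalled above, and reduce the biorthogonality condition to a triangular linear system in the coefficients of $q_{\mathbf k}$ relative to this basis. Since $\{p_{\mathbf m}: |\mathbf m| \leq N\}$ is a basis of the space of polynomials of total degree at most $N$, every candidate $q_{\mathbf k}$ of degree $|\mathbf k|$ admits a unique expansion
\[
q_{\mathbf k}(\mathbf x) \;=\; \sum_{|\mathbf m| \leq |\mathbf k|} c^{(\mathbf k)}_{\mathbf m}\, p_{\mathbf m}(\mathbf x),
\]
so both existence and uniqueness of $q_{\mathbf k}$ are equivalent to existence and uniqueness of the coefficients $c^{(\mathbf k)}_{\mathbf m}$.

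Next I would compute $\epsilon(\mathbf 0)\,\Phi_{\mathbf n}(q_{\mathbf k})$ explicitly. Iterating property (3) of a basic sequence and using that the $\mathfrak{d}_i$ commute yields $\mathfrak{d}_1^{n_1}\cdots\mathfrak{d}_d^{n_d}\, p_{\mathbf m} = \binom{\mathbf m}{\mathbf n}\mathbf n!\, p_{\mathbf m-\mathbf n}$ when $\mathbf n \leq \mathbf m$ componentwise and $0$ otherwise, and together with $p_{\mathbf j}(\mathbf 0) = \delta_{\mathbf j,\mathbf 0}$ this gives
\[
\epsilon(\mathbf 0)\,\Phi_{\mathbf n}(q_{\mathbf k}) \;=\; \mathbf n! \sum_{\substack{\mathbf m \geq \mathbf n \\ |\mathbf m| \leq |\mathbf k|}} c^{(\mathbf k)}_{\mathbf m} \binom{\mathbf m}{\mathbf n} \lambda_{\mathbf n,\mathbf m-\mathbf n},
\]
where $\lambda_{\mathbf n,\mathbf j} := [\mathfrak{L}_{\mathbf n} p_{\mathbf j}](\mathbf 0)$; in particular $\lambda_{\mathbf n,\mathbf 0}$ equals the constant term of the indicator of $\mathfrak{L}_{\mathbf n}$, which is nonzero by hypothesis. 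For $|\mathbf n| > |\mathbf k|$ the sum is empty, so the required identity holds automatically. For $|\mathbf n| \leq |\mathbf k|$ the equation indexed by $\mathbf n$ involves only unknowns $c^{(\mathbf k)}_{\mathbf m}$ with $\mathbf m \geq \mathbf n$, hence with $|\mathbf m| \geq |\mathbf n|$, and the coefficient of the ``diagonal'' unknown $c^{(\mathbf k)}_{\mathbf n}$ equals the nonzero scalar $\mathbf n!\,\lambda_{\mathbf n,\mathbf 0}$. Stratifying by total degree $|\mathbf n|$ and solving from the top layer downwards therefore determines all the coefficients uniquely by back-substitution, proving both existence and uniqueness.

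The only delicate point is to verify that $q_{\mathbf k}$ has degree exactly $|\mathbf k|$ rather than strictly less. Starting the back-substitution at $|\mathbf n|=|\mathbf k|$, the constraint $\mathbf m \geq \mathbf n$ combined with $|\mathbf m| \leq |\mathbf k| = |\mathbf n|$ forces $\mathbf m = \mathbf n$, and the equations collapse to $c^{(\mathbf k)}_{\mathbf m} = \delta_{\mathbf m,\mathbf k}/\lambda_{\mathbf k,\mathbf 0}$ for all $|\mathbf m|=|\mathbf k|$. Consequently the top-degree homogeneous component of $q_{\mathbf k}$ is $\lambda_{\mathbf k,\mathbf 0}^{-1}$ times the leading form of $p_{\mathbf k}$, which is nonzero since $\deg p_{\mathbf k} = |\mathbf k|$. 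The main bookkeeping hurdle is that componentwise dominance on $\mathbb{N}^d$ is only a partial order; the resolution is to use the implication $\mathbf m \geq \mathbf n \Rightarrow |\mathbf m| \geq |\mathbf n|$ to organize the cascade by the total order $|\mathbf n|$, within each layer of which the vanishing $c^{(\mathbf k)}_{\mathbf m}=\delta_{\mathbf m,\mathbf k}/\lambda_{\mathbf k,\mathbf 0}$ at the top level propagates cleanly downward.
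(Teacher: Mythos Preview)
Your proof is correct and follows essentially the same approach as the paper: both expand relative to the basic sequence $\{p_{\mathbf m}\}$ and exploit the triangular structure of the pairing $\epsilon(\mathbf 0)\Phi_{\mathbf n}(p_{\mathbf m})$, driven by the fact that $\mathfrak d^{\mathbf n}p_{\mathbf m}=0$ unless $\mathbf n\le\mathbf m$ and that $\lambda_{\mathbf n,\mathbf 0}\ne 0$. The only organizational difference is that the paper works in the smaller space $\Pi^d_{\mathbf n}(\Delta)=\mathrm{span}\{p_{\mathbf k}:\mathbf k\le\mathbf n\}$ (componentwise order), proves linear independence of the functionals there, and then invokes an abstract solvability lemma, whereas you work in the larger space $\mathrm{span}\{p_{\mathbf m}:|\mathbf m|\le|\mathbf k|\}$ and solve the explicit system by back-substitution through layers of total degree; your extra coefficients $c^{(\mathbf k)}_{\mathbf m}$ with $\mathbf m\not\le\mathbf k$ simply come out zero, so the two constructions coincide.
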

To prove Theorem \ref{biorthogonal} we need some additional notation and a couple lemmas.
\begin{definition*}
 A subset $S$ of $\mathbb{N}^d$ is a lower set if for any $\mathbf{n} \in S$, it follows that
 $\mathbf{k} \in S$ for any $\mathbf{0} \leq \mathbf{k}\leq  \mathbf{n}$.
\end{definition*}
For any lower set $S$, we let
$$
\Pi^d_S(\Delta) = \left\{ q(\bbx):  q(\bbx) = \sum_{\mathbf{k} \in S}
 a_{\mathbf{k}} p_{\mathbf{k}}(\bbx), a_{\mathbf{k}} \in \mathbb{R} \right\},
$$
where $\{p_{\mathbf{n}}\}_{\mathbf{n} \in \mathbb{N}^d}$ is the basic sequence of $(\dd_1 , \dd_2, \dots, \dd_d)$.
In particular, we write
$$
\Pi^d_{\mathbf{n}}(\Delta) = \left\{ q(\bbx): q(\bbx) = \sum_{\mathbf{k} \leq \mathbf{n}}
 a_{\mathbf{k}} p_{\mathbf{k}}(\bbx), a_{\mathbf{k}} \in \mathbb{R} \right\}.
$$
\begin{lemma} \label{independence}
Let $S \subseteq \mathbb N^d$ a lower set. For any $\mathbf{n}  \in \mathbb{N}^d$, the set of functionals
\begin{equation}
\{  \Psi_{\mathbf{k} } = \epsilon(\mathbf 0) \Phi_{\mathbf{k}}:  \mathbf{k} \in \mathbb{N}^d,
\mathbf{k}  \leq \mathbf{n} \}
\end{equation}
is a linearly independent set over $\Pi^d_{\mathbf{n}}(\Delta)$.
\end{lemma}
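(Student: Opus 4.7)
The plan is to show that the evaluation matrix $M=[\Psi_{\mathbf{k}}(p_{\mathbf{j}})]_{\mathbf{k},\mathbf{j}\le\mathbf{n}}$ is triangular with a non-vanishing diagonal once one linearly extends the componentwise partial order $\le$ on $\mathbb{N}^d$ to a total order. Since $\{p_{\mathbf{j}}:\mathbf{j}\le\mathbf{n}\}$ is a basis of $\Pi^d_{\mathbf{n}}(\Delta)$, invertibility of $M$ will be equivalent to the linear independence of the functionals $\Psi_{\mathbf{k}}$ for $\mathbf{k}\le\mathbf{n}$ over $\Pi^d_{\mathbf{n}}(\Delta)$, which is what must be proved.

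For the triangularity, I would iterate condition (3) of a basic sequence, namely $\dd_i p_{\mathbf{m}}=m_i p_{\mathbf{m}-\mathbf{e}_i}$, to obtain
\[
\dd_1^{k_1}\dd_2^{k_2}\cdots\dd_d^{k_d}(p_{\mathbf{j}})=
\begin{cases}
\dfrac{\mathbf{j}!}{(\mathbf{j}-\mathbf{k})!}\,p_{\mathbf{j}-\mathbf{k}} & \text{if } \mathbf{k}\le\mathbf{j},\\[1mm]
0 & \text{otherwise,}
\end{cases}
\]
the second case following because, if $k_i>j_i$ for some $i$, the $(j_i{+}1)$-st application of $\dd_i$ produces a factor of $0$. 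Applying $\LL_{\mathbf{k}}$ and evaluating at $\mathbf{0}$ then gives $\Psi_{\mathbf{k}}(p_{\mathbf{j}})=0$ whenever $\mathbf{k}\not\le\mathbf{j}$, which is the triangularity claim once one orders the indices $\{\mathbf{k}:\mathbf{k}\le\mathbf{n}\}$ by any linear extension of $\le$ (for example, by $|\mathbf{k}|$ with lexicographic tie-breaking).

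For the diagonal, specialize to $\mathbf{j}=\mathbf{k}$: the previous calculation gives $\dd_1^{k_1}\cdots\dd_d^{k_d}(p_{\mathbf{k}})=\mathbf{k}!\,p_{\mathbf{0}}=\mathbf{k}!$, and the Parrish representation \eqref{equ:Parrish_representation} $\LL_{\mathbf{k}}=\sum_{\mathbf{m}} a^{(\mathbf{k})}_{\mathbf{m}}D_1^{m_1}\cdots D_d^{m_d}$ shows that every summand annihilates the constant $\mathbf{k}!$ except the $\mathbf{m}=\mathbf{0}$ term. Hence $\Psi_{\mathbf{k}}(p_{\mathbf{k}})=\mathbf{k}!\,a^{(\mathbf{k})}_{\mathbf{0}}$, and this is non-zero precisely because of the standing hypothesis, in the definition of $\Phi_{\mathbf{k}}$ at (\ref{Phi}), that the indicator of $\LL_{\mathbf{k}}$ has non-zero constant term.

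Combining the two steps, $M$ is triangular with diagonal entries $\mathbf{k}!\,a^{(\mathbf{k})}_{\mathbf{0}}\ne 0$, so $\det M\ne 0$, which gives the desired linear independence. There is no serious obstacle; the only care needed is in the bookkeeping for the iteration of the $\dd_i$, to verify that the ``off-diagonal'' entries vanish exactly when $\mathbf{k}\not\le\mathbf{j}$ and that the two hypotheses on $\Phi_{\mathbf{k}}$ (the factor $\dd_1^{k_1}\cdots\dd_d^{k_d}$ and the non-zero constant term of $\LL_{\mathbf{k}}$) are both indispensable, the first controlling triangularity and the second positivity of the diagonal.
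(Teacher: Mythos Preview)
Your argument is correct and is essentially the same as the paper's: both hinge on the computation that $\dd_1^{k_1}\cdots\dd_d^{k_d}(p_{\mathbf{j}})$ vanishes unless $\mathbf{k}\le\mathbf{j}$ and equals $\mathbf{k}!$ when $\mathbf{j}=\mathbf{k}$, together with the non-zero constant term of $\LL_{\mathbf{k}}$. The paper phrases this as an induction on $|\mathbf{k}|$ (substituting $p_{\mathbf{k}}$ into a vanishing linear combination to peel off $\lambda_{\mathbf{k}}$), whereas you package the same data as upper-triangularity of the evaluation matrix under a linear extension of $\le$; these are two standard ways of saying the same thing.
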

\begin{proof}
Suppose there are real numbers $\lambda_{\mathbf{k}}$ such that
\begin{equation} \label{linear-1}
\Psi (p(\mathbf{x})) = \sum_{\mathbf{k} \leq \mathbf{n} }\lambda_{\mathbf{k} }\Psi_{\mathbf{k}} (p(\bbx)) =0
\end{equation}
for each $p(\bbx)  \in \Pi^d_{\mathbf{n}}(\Delta)$. We will show that the coefficients $\lambda_{\mathbf{k}}$ all vanish.

The basic sequence $\{ p_{\mathbf{n}}:  \mathbf{n} \in \mathbb{N}^d\}$
 has the property that
\begin{equation}
\dd_1^{k_1}  \dd_2^{k_2} \cdots \dd_d^{k_d}  p_{\mathbf{n}} = (n_1)_{k_1} (n_2)_{k_2} \cdots  (n_d)_{k_d} p_{\mathbf{n}-\mathbf{k}}
\end{equation}
for each $\mathbf{k} \leq \mathbf{n}$, and
\begin{equation} \label{vanish}
\dd_1^{k_1}  \dd_2^{k_2} \cdots \dd_d^{k_d}  p_{\mathbf{n}} = 0
\end{equation}
whenever $k_i > n_i$ for some $i$.

Substituting  $p_{\mathbf{0}}(\bbx) =1$ into Eq.~\eqref{linear-1}, we have
\begin{equation}
\Psi (p_{\mathbf{0}} (\mathbf{x})) =  \lambda_{\mathbf{0} }
\varepsilon(\mathbf{0}) \LL_{\mathbf{0}} (1)  =0.
\end{equation}
Since the indicator of $\mathfrak{L}_{\mathbf{0}}$ has non-zero constant
term, this yields $\LL_{\mathbf{0}} (1)  \neq 0$, and hence $\lambda_{\mathbf{0}}=0$.

Next we show that for any index vector $\mathbf{k} \in \mathbb{N}^d$ with $\mathbf{0} \leq \mathbf{k} \leq \mathbf{n}$,  if $\lambda_{\mathbf{s} } =0$ for all
$\mathbf{0} \leq \mathbf{s} \leq \mathbf{k}$ and $\mathbf{s} \neq \mathbf{k}$, then
$\lambda_{\mathbf{k}} =0$.
To see this, we substitute $p_{\mathbf{k}}(\bbx)$ into Eq.~\eqref{linear-1} and use the assumption
that $\lambda_{\mathbf{s}}=0$  for $\mathbf{s} < \mathbf{n}$
and Eq.~\eqref{vanish}, so as to find that
$$
\Psi(p_{\mathbf{k}}(\bbx)) = \lambda_{\mathbf{k}} \epsilon(\mathbf 0) \LL_{\mathbf{k}}( \mathbf{k}!).
$$
But the indicator of $\LL_{\mathbf{k}}$ has non-zero constant term, so
 $\LL_{\mathbf{k}}( \mathbf{k}!)$ is a non-zero constant. Evaluating at $\mathbf{0}$ does not change its value, and this implies $\lambda_{\mathbf{k}}=0$.

The lemma then follows from an induction on $|\mathbf{k}|$.
\end{proof}

\begin{lemma} \label{linear-2}
 Let $V$ be a  vector space of dimension $m$, and $\{ \tt_1, \tt_2, \dots, \tt_m\} $ an
 independent  set  of linear functionals on $V$. Then, for any
$b_1, \dots, b_m \in \mathbb R$ there is a unique vector $v \in V$ such that
 $ \tt_i(v) = b_i$ for all $i$.
 \end{lemma}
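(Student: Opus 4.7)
The plan is to reduce the lemma to a standard fact of finite-dimensional linear algebra by packaging the $m$ functionals into a single joint evaluation map
$$
T \colon V \to \mathbb{R}^m, \qquad T(v) = \bigl(\tt_1(v), \tt_2(v), \dots, \tt_m(v)\bigr).
$$
This map is linear because each $\tt_i$ is, and the conclusion of the lemma is precisely that $T$ is a bijection: surjectivity gives the existence of a $v$ for each prescribed right-hand side $(b_1, \dots, b_m) \in \mathbb{R}^m$, and injectivity gives its uniqueness. Since $\dim V = m = \dim \mathbb{R}^m$, the rank--nullity theorem reduces the task to proving that $T$ is injective.

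For injectivity, I would argue as follows. Suppose $T(v) = 0$, i.e., $\tt_i(v) = 0$ for every $i$. The dual space $V^*$ has dimension $m$, and $\{\tt_1, \dots, \tt_m\}$ is an independent set of cardinality $m$ in $V^*$, hence a basis of $V^*$. Consequently every linear functional $\tt \in V^*$ is a linear combination of the $\tt_i$ and therefore vanishes on $v$. Fixing any basis $\{v_1, \dots, v_m\}$ of $V$ together with the associated dual coordinate functionals $\{v_1^*, \dots, v_m^*\} \subset V^*$, this gives $v_j^*(v) = 0$ for all $j$, so every coordinate of $v$ in the chosen basis vanishes and $v = 0$.

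I do not anticipate a real obstacle: the statement is entirely elementary finite-dimensional linear algebra and does not interact with the delta-operator or Gon\v{c}arov machinery developed earlier in the paper. The only mildly delicate point is the step that an independent set of $m$ functionals is automatically a basis of $V^*$, which relies on the equality $\dim V^* = \dim V = m$; this is guaranteed by the standing assumption that $V$ is finite-dimensional.
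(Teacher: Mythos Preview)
Your proof is correct and takes essentially the same approach as the paper, which also reduces the statement to elementary finite-dimensional linear algebra. The paper's version is slightly more concrete---it fixes a basis $\{e_1,\dots,e_m\}$ of $V$, forms the matrix $A=(\tt_i(e_j))$, and observes that the lemma is equivalent to $Ax=b$ being uniquely solvable when $A$ has linearly independent rows---while you package the same idea coordinate-freely via the evaluation map $T$ and rank--nullity; the two arguments are interchangeable.
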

 \begin{proof}
 Let $\{e_1, e_2, \dots, e_m\}$ be a basis of $V$  and let $a_{i,j} = \tt_i (e_j)$.  Then the lemma just says that $Ax=b$ is uniquely solvable if and only if the matrix $A=(a_{i,j})$ has linearly independent rows.
  \end{proof}
Combining Lemmas \ref{independence} and \ref{linear-2}, we have the following:
\begin{corollary} \label{biorthogonal-cor}
Under the assumptions of Lemma \ref{independence}, given any multiset of real numbers
$\{b_{\mathbf{k}}:   \mathbf{k} \in \mathbb{N}^d, \ \mathbf{k} \leq \mathbf{n} \}$, there is a unique polynomial
 $q(\bbx) \in \Pi_{\mathbf{n}}^d(\Delta)$ such that, for all $ \mathbf{k} \leq \mathbf{n}$,
\begin{equation}
\varepsilon (\mathbf{0}) \Phi_{\mathbf{k}} (q(\bbx)) =
\varepsilon (\mathbf{0}) \LL_{\mathbf{k}}  \dd_1^{k_1} \dd_2^{k_2} \cdots \dd_d^{k_d}(q(\bbx)) =  b_{\mathbf{k}}.
\end{equation}
\end{corollary}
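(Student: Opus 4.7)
The plan is to read the corollary simply as the conjunction of the two preceding lemmas, applied to the finite-dimensional space $\Pi_{\mathbf{n}}^d(\Delta)$. First I would note that, by the very definition of $\Pi_{\mathbf{n}}^d(\Delta)$ and the fact that $\{p_{\mathbf{k}}\}_{\mathbf{k} \in \mathbb{N}^d}$ is a basic sequence (and hence a vector-space basis for polynomials graded by total degree, with each level contained in a level set of the appropriate $\Pi_\mathbf{n}^d(\Delta)$), the family $\{p_{\mathbf{k}} : \mathbf{k} \leq \mathbf{n}\}$ forms a basis for $\Pi_{\mathbf{n}}^d(\Delta)$. Consequently, $\dim \Pi_{\mathbf{n}}^d(\Delta)$ equals the cardinality of the lower set $\{\mathbf{k} \in \mathbb{N}^d : \mathbf{k} \leq \mathbf{n}\}$, namely $\prod_{i=1}^{d}(n_i+1)$.

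Next I would invoke Lemma \ref{independence} with $S = \{\mathbf{k} : \mathbf{k} \leq \mathbf{n}\}$ (which is indeed a lower set) to conclude that the family of linear functionals
$$
\bigl\{\Psi_{\mathbf{k}} = \varepsilon(\mathbf{0})\,\Phi_{\mathbf{k}} \,:\, \mathbf{k} \leq \mathbf{n}\bigr\}
$$
is linearly independent on $\Pi_{\mathbf{n}}^d(\Delta)$. Since the number of these functionals is exactly $\prod_{i=1}^d (n_i+1) = \dim \Pi_{\mathbf{n}}^d(\Delta)$, the set $\{\Psi_\mathbf{k} : \mathbf{k}\leq \mathbf{n}\}$ is a basis for the dual space $\Pi_{\mathbf{n}}^d(\Delta)^{\ast}$.

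Finally, I would apply Lemma \ref{linear-2} to the vector space $V = \Pi_{\mathbf{n}}^d(\Delta)$, the independent functionals $\{\Psi_{\mathbf{k}} : \mathbf{k}\leq\mathbf{n}\}$, and the prescribed values $\{b_{\mathbf{k}} : \mathbf{k}\leq\mathbf{n}\}$. This yields existence and uniqueness of a polynomial $q(\mathbf{x}) \in \Pi_{\mathbf{n}}^d(\Delta)$ with $\Psi_{\mathbf{k}}(q) = b_{\mathbf{k}}$ for every $\mathbf{k}\leq\mathbf{n}$, which is precisely the assertion of the corollary once one unwinds the abbreviation $\Phi_{\mathbf{k}} = \LL_{\mathbf{k}} \dd_1^{k_1}\dd_2^{k_2}\cdots\dd_d^{k_d}$ from equation \eqref{Phi}.

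There is really no serious obstacle here; the whole content of the corollary is a dimension-count combined with the two lemmas. The only point where one needs to pause is in verifying that the cardinality of the functional family coincides with $\dim \Pi_{\mathbf{n}}^d(\Delta)$, so that Lemma \ref{linear-2} is applicable with $m$ equal to both counts. Once this matching is observed, existence and uniqueness drop out simultaneously from the nonsingularity of the square matrix $(\Psi_{\mathbf{k}}(p_{\mathbf{j}}))_{\mathbf{k}, \mathbf{j} \leq \mathbf{n}}$ implicit in Lemma \ref{linear-2}.
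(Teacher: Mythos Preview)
Your proposal is correct and follows exactly the route the paper takes: the corollary is stated in the paper as an immediate consequence of Lemmas~\ref{independence} and~\ref{linear-2}, and your write-up simply spells out the dimension count that makes Lemma~\ref{linear-2} applicable. There is nothing to add.
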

Theorem \ref{biorthogonal} follows from Corollary \ref{biorthogonal-cor} by  letting
 $q_{\mathbf{n}}(\bbx)$ be the unique polynomial in $\Pi_{\mathbf{n}}^d(\Delta)$
 that satisfies
\begin{eqnarray} \label{partial to all}
\varepsilon (\mathbf{0}) \Phi_{\mathbf{k}} (q_{\mathbf{n}} (\bbx)) = \mathbf{n} ! \delta_{\mathbf{n}, \mathbf{k}}
\end{eqnarray}
for all $\mathbf{k}\leq  \mathbf{n}$.
By the definition of $\Pi_{\mathbf{n}}^d(\Delta)$,  given any polynomial $r(\bbx) \in \Pi_{\mathbf{n}}^d(\Delta)$
and  any $\mathbf{k}'$ with
$k_i' > n_i $ for some $i$,   we have
$$
\dd_1^{k_1'} \dd_2^{k_2'} \cdots \dd_d^{k_d'} (r(\bbx)) =0.
$$
In particular,  $\epsilon(\mathbf{0}) \Phi_{\mathbf{k}'} (q_{\mathbf{n}}(\bbx)) = 0$, and this shows that
the biorthogonality condition \eqref{partial to all} holds for all $\mathbf{k} \in \mathbb{N}^d$.
On the other hand, it is clear that $q_\mathbf{n}(\bbx)$ is of degree $m=|\mathbf{n}|$, otherwise $q_\mathbf{n}(\bbx)$ would be a linear
combination of $\{p_{\mathbf{k}}: |\mathbf{k}| < m\}$, and hence we would have $\epsilon (\mathbf{0})\Phi_{\mathbf{n}}(q_\mathbf{n}(\bbx))=0$,
which is a contradiction.

Putting it all together, we see from the above that the sequence of polynomials $\{ q_{\mathbf{n}}(\bbx)\}_{\mathbf{n} \in \mathbb{N}^d}$
is biorthogonal to the set of operators $\{ \Phi_{\mathbf{n}}: \mathbf{n} \in \mathbb{N}^d\}$.

In particular, if  each $\LL_{\mathbf{n}}$ in Eq.~\eqref{Phi} is a shift operator, then the set of polynomials biorthogonal to $\{ \Phi_{\mathbf{n}}: \mathbf{n} \in \mathbb{N}^d\}$ is just the set of the delta Gon\v{c}arov polynomials in which we are interested.
More precisely, using that $\epsilon(\mathbf{0})\EE_{\mathbf{v}} = \epsilon(\mathbf{v})$, we have the following:
\begin{definition*}
Let $\Delta = (\dd_1, \dd_2, \dots, \dd_d)$ be a system of delta operators, and
$Z=\{z_{\mathbf{k}}: \mathbf{k} \in S\} \subseteq \mathbb{R}^d$ a multiset of nodes,
which we refer to as an interpolation grid.
The delta Gon\v{c}arov polynomial  $t_{\mathbf{n}}(\bbx; Z)$
is the unique polynomial in $\Pi_{\mathbf{n}}^d(\Delta)$ satisfying
\begin{equation}\label{interpolation-condition}
\varepsilon (z_{\mathbf{k}})
\dd_1^{k_1}  \dd_2^{k_2} \cdots \dd_d^{k_d} (t_{\mathbf{n}}(\bbx;Z)) = \mathbf{n}!  \delta_{\mathbf{k}, \mathbf{n}}
\end{equation}
for all $\mathbf{k}  \leq \mathbf{n}$.
\end{definition*}
From the preceding
discussion, we see that $t_{\mathbf{n}}(\bbx; Z)$ only depends on the set of nodes
in $Z_{\mathbf{n}}= \{z_{\mathbf{k}}: \mathbf{k} \leq \mathbf{n}\}$.

The delta Gon\v{c}arov polynomials form a basis for the solutions of the following  interpolation
problem with a system of delta operators $\Delta = (\dd_1, \dd_2, \dots, \dd_d)$,
which we call the \textit{delta Gon\v{c}arov interpolation problem in dimension $d$}:
\begin{problem*}
Let $S$ be a lower set.
Given a multiset of nodes $Z=\{z_{\mathbf{k}}: \mathbf{k} \in S\} \subseteq \mathbb{R}^d$
and a multiset of real numbers $\{ b_{\mathbf{k}}: \mathbf{k} \in S\}$,
find a polynomial $P(\bbx)$
in $\Pi^d_S(\Delta)$ such that, for all $\mathbf{k} \in S$,
$$
\epsilon (z_{\mathbf{k}}) \dd_1^{k_1}  \dd_2^{k_2} \cdots \dd_d^{k_d}
(P(\bbx))  = b_{\mathbf{k}}.
$$
\end{problem*}
By Theorem \ref{biorthogonal}, the above problem has the unique solution
\begin{equation*}
P(\bbx) = \sum_{\mathbf{k} \in S}
\frac{b_{\mathbf{k}}}{\mathbf{k}!}t_{\mathbf{k}}(\bbx; Z).
\end{equation*}
Conversely, any polynomial in $\Pi^d_S(\Delta)$ can be expanded as a linear combination of delta Gon\v{c}arov polynomials.
This is the content of the next proposition, which follows from the orthogonality relations.
\begin{proposition} \label{expansion}
For any $P \in \Pi^d_S(\Delta)$,
\begin{equation*}
P(\bbx) = \sum_{\mathbf{k} \in S }
 \frac{1}{\mathbf{k}!}\left[ \varepsilon(z_{\mathbf{k}})\dd_1^{k_1} \dd_2^{k_2}
 \cdots \dd_d^{k_d} (P(\bbx)) \right]t_{\mathbf{k}}(\bbx; Z).
\end{equation*}
\end{proposition}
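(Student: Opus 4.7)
The plan is to use the biorthogonality relations (the extended version of equation~\eqref{partial to all} derived in the proof of Theorem~\ref{biorthogonal}, instantiated at $\LL_\mathbf{k} = \EE_{z_\mathbf{k}}$) to compute the coefficients of $P$ in the basis of delta Gon\v{c}arov polynomials. The key input is that
$$
\varepsilon(z_\mathbf{k})\dd_1^{k_1}\cdots\dd_d^{k_d}\bigl(t_\mathbf{n}(\bbx;Z)\bigr) = \mathbf{n}!\,\delta_{\mathbf{k},\mathbf{n}}
\qquad \text{for all } \mathbf{k},\mathbf{n}\in\mathbb N^d,
$$
which was already established in the excerpt.

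First I would verify that $\{t_\mathbf{k}(\bbx;Z) : \mathbf{k} \in S\}$ is a basis of $\Pi^d_S(\Delta)$. Each $t_\mathbf{k}$ lies in $\Pi^d_\mathbf{k}(\Delta) \subseteq \Pi^d_S(\Delta)$, the second inclusion using that $\mathbf{k}\in S$ and that $S$ is a lower set. Linear independence is immediate: applying $\varepsilon(z_\mathbf{j})\dd_1^{j_1}\cdots\dd_d^{j_d}$ to a finite vanishing combination $\sum a_\mathbf{k} t_\mathbf{k}=0$ forces $a_\mathbf{j}\mathbf{j}!=0$. To argue spanning, I would pass to the finite lower subset $T := \bigcup_{\mathbf{m} \in \operatorname{supp}(P)}\{\mathbf{k}:\mathbf{k}\le \mathbf{m}\} \subseteq S$, which contains the support of any given $P$. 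Then $P \in \Pi^d_T(\Delta)$, a finite-dimensional space of dimension $|T|$ in which the $|T|$ linearly independent polynomials $\{t_\mathbf{k}: \mathbf{k}\in T\}$ must form a basis.

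Next I would expand $P = \sum_{\mathbf{k}\in T} c_\mathbf{k}\, t_\mathbf{k}(\bbx;Z)$ and apply $\varepsilon(z_\mathbf{j})\dd_1^{j_1}\cdots\dd_d^{j_d}$ to both sides. By biorthogonality, only the $\mathbf{j}$-th term survives, yielding
$$
c_\mathbf{j} = \frac{1}{\mathbf{j}!}\,\varepsilon(z_\mathbf{j})\,\dd_1^{j_1}\cdots\dd_d^{j_d}\bigl(P(\bbx)\bigr) \qquad (\mathbf{j} \in T).
$$
To finish, I would show that this formula also produces the correct (zero) coefficient for indices $\mathbf{k}\in S\setminus T$. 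For such $\mathbf{k}$, by definition of $T$ we have $\mathbf{k} \not\le \mathbf{m}$ for every $\mathbf{m}\in\operatorname{supp}(P)$, so some $k_i > m_i$; since the basic sequence satisfies $\dd_i^{k_i}(p_\mathbf{m}) = (m_i)_{k_i}\,p_{\mathbf{m}-k_i\mathbf{e}_i} = 0$ whenever $k_i > m_i$, it follows that $\dd_1^{k_1}\cdots\dd_d^{k_d}(P) = 0$, so the $\mathbf{k}$-th summand on the right-hand side of the proposition vanishes and can be harmlessly included. Combining these observations yields the asserted expansion.

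The only mild obstacle is bookkeeping: the lower set $S$ need not possess a single upper bound dominating $\operatorname{supp}(P)$, so the spanning argument cannot be run directly inside some $\Pi^d_\mathbf{n}(\Delta)$; introducing the auxiliary finite lower set $T$ reduces the problem to a finite-dimensional setting where biorthogonality immediately forces both existence and uniqueness of the expansion.
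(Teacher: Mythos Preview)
Your proof is correct and follows essentially the same approach as the paper, which simply states that the proposition ``follows from the orthogonality relations'' without further detail. Your careful reduction to a finite lower set $T$ to handle arbitrary (possibly infinite or non-principal) lower sets $S$ is a useful elaboration that the paper elides.
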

In particular, substituting $P(\bbx)$ in Proposition \ref{expansion} with the basic polynomial $p_{\mathbf{n}}(\bbx)$, and noting that $\binom{\mathbf{n}}{\mathbf{k}} =0$ unless
$\mathbf{k} \leq \mathbf{n}$,
we obtain a binomial relation
between the basic sequence and the delta Gon\v{c}arov polynomials,  which
generalizes the linear recurrence for univariate Gon\v{c}arov polynomials
\cite[Eq.~(2.5)]{[KY03]}.

\begin{proposition} 
Let $\{p_{\mathbf{n}}(\bbx)\}_{\mathbf{n}\in \mathbb{N}^d}$  be the basic sequence of the system of delta operators $(\dd_1, \dd_2,\ldots, \dd_d)$.
Then
\begin{equation} \label{linear-recursion}
p_{\mathbf{n}}(\bbx) = \sum_{ \mathbf{k} \leq \mathbf{n} }  \binom{\mathbf{n}}{\mathbf{k}}
p_{\mathbf{n}-\mathbf{k}} (z_{\mathbf{k}})t_{\mathbf{k}}(\bbx; Z).
\end{equation}
\end{proposition}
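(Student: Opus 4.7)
The plan is to derive the identity by direct substitution into the expansion formula of Proposition \ref{expansion}, exactly as the paragraph preceding the proposition suggests. The key inputs are (i) that $p_{\mathbf{n}}(\bbx)$ itself lies in $\Pi_{\mathbf{n}}^d(\Delta)$, so that the expansion applies with $S$ any lower set containing $\{\mathbf{k} : \mathbf{k}\leq\mathbf{n}\}$, and (ii) the iterated eigenvalue relation for basic sequences, which follows by induction from property~(3) in the definition of basic sequence.

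More concretely, first I would record the formula
\[
\dd_1^{k_1}\dd_2^{k_2}\cdots\dd_d^{k_d}\,p_{\mathbf{n}}(\bbx) \;=\; \frac{\mathbf{n}!}{(\mathbf{n}-\mathbf{k})!}\,p_{\mathbf{n}-\mathbf{k}}(\bbx)
\]
when $\mathbf{k}\leq\mathbf{n}$, obtained by iterating $\dd_i(p_{\mathbf{n}}) = n_i p_{\mathbf{n}-\mathbf{e}_i}$ in each coordinate $k_i$ times; and also recall that the same expression vanishes whenever $k_i>n_i$ for some $i$, as noted in \eqref{vanish}. Then I would apply Proposition \ref{expansion} with $P(\bbx)=p_{\mathbf{n}}(\bbx)$: the terms with $\mathbf{k}\not\leq\mathbf{n}$ vanish by the above, and for $\mathbf{k}\leq\mathbf{n}$ the evaluation at $z_{\mathbf{k}}$ produces $\frac{\mathbf{n}!}{(\mathbf{n}-\mathbf{k})!}\,p_{\mathbf{n}-\mathbf{k}}(z_{\mathbf{k}})$. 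Dividing by $\mathbf{k}!$ collapses the combinatorial factor to $\binom{\mathbf{n}}{\mathbf{k}}$, yielding \eqref{linear-recursion}.

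There is essentially no obstacle here; the proposition is a direct corollary of Proposition \ref{expansion} combined with the two intertwining identities for basic sequences. The only care needed is to justify restricting the sum to indices $\mathbf{k}\leq\mathbf{n}$ (either through the vanishing in \eqref{vanish}, or by observing that $\binom{\mathbf{n}}{\mathbf{k}}=0$ when some $k_i>n_i$, which is the viewpoint taken in the statement of the proposition).
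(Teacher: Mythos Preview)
Your proposal is correct and follows exactly the approach the paper takes: the paper itself states (in the sentence immediately preceding the proposition) that the result is obtained by substituting $P(\bbx)=p_{\mathbf{n}}(\bbx)$ into Proposition~\ref{expansion} and noting that $\binom{\mathbf{n}}{\mathbf{k}}=0$ unless $\mathbf{k}\leq\mathbf{n}$. Your explicit derivation of the coefficient via $\dd_1^{k_1}\cdots\dd_d^{k_d}\,p_{\mathbf{n}} = \frac{\mathbf{n}!}{(\mathbf{n}-\mathbf{k})!}\,p_{\mathbf{n}-\mathbf{k}}$ is precisely the computation the paper leaves implicit.
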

%
%
%
%
\section{Sequences of binomial type}
The basic sequence of a system of delta operators is of binomial type.
In general, this is not true for multivariate delta Gon\v{c}arov polynomials.
In the present section we give a necessary and sufficient condition under which
a sequence of  delta Gon\v{c}arov polynomials  is of binomial type.

The following definition is adapted from Parrish \cite{parrish}.
\begin{definition*}
A polynomial sequence $\{q_{\mathbf{n}}(\bbx)\}_{\mathbf{n} \in \mathbb{N}^d}$ is
 said to be of binomial type if $q_{\mathbf{0}},  q_{\mathbf{e}_1}, \ldots, q_{\mathbf{e}_d}$
 span the vector space of linear polynomials in $\mathbb{R}[\bbx]$ and, for
 any $\mathbf{n} \in \mathbb{N}^d$, $q_{\mathbf{n}}$ satisfies the identity
\begin{equation*}
q_{\mathbf{n}}(\bbx + \mathbf{y}) = \sum_{ \mathbf{k} \leq \mathbf{n}}
\binom{\mathbf{n}}{\mathbf{k}} q_{\mathbf{k}}(\bbx) q_{\mathbf{n}-\mathbf{k} }(\mathbf{y}).
\end{equation*}
\end{definition*}
In particular, Parrish showed in \cite{parrish} that:
\begin{enumerate}[label={(\arabic{*})}]
\item
A sequence of polynomials is  of binomial type if and only if
it is the basic sequence of a system of delta operators;
\item\label{item:Parrish_(2)}
A sequence of polynomials $\{ p_{\mathbf{n}}(\bbx)\}_{\mathbf{n} \in \mathbb{N}^d}  $ is a sequence of binomial type if and only if
\begin{eqnarray} \label{basic-GF}
\sum_{\mathbf{n} \in \mathbb{N}^d}   p_{\mathbf{n}}(\bbx)
\frac{\mathbf{y}^{\mathbf{n}}}{\mathbf{n}!}
= e^{\mathbf{x} \cdot \mathbf{g}(\mathbf{y})}
\end{eqnarray}
for an admissible system of formal  power series  $\mathbf{g}=(g_1, g_2, \dots, g_d)$.
\item  If $\{ p_{\mathbf{n}}(\bbx)\}_{\mathbf{n} \in \mathbb{N}^d}  $
 is the basic sequence of a system
$(\dd_1, \dd_2, \dots, \dd_d)$ of delta operators,
then the formal power series $g_1, g_2, \dots, g_d$ in point \ref{item:Parrish_(2)} above are, respectively, the compositional
inverses of $f_1, f_2, \dots, f_d$, where $f_i$ is the indicator of $\dd_i$ for each $i$. Consequently, Eq.~\eqref{basic-GF} is equivalent to
the following \textit{Appell relation} for the sequence $\{p_{\mathbf{n}}(\bbx)\}_{\mathbf{n} \in \mathbb{N}^d}$:
\begin{equation} \label{Appell-basic}
\sum_{\mathbf{n} \in \mathbb{N}^d} \frac{ p_{\mathbf{n}}(\bbx)}{\mathbf{n}!}
\mathbf{f}(\mathbf{y}) ^{\mathbf{n}}= e^{\mathbf{x} \cdot \mathbf{y}},
\end{equation}
where $\mathbf{f}(\mathbf{y}) =( f_1(\mathbf{y}), f_2(\mathbf{y}), \dots, f_d(\mathbf{y}))$.
\end{enumerate}
%
%
%

%
Now, fix a system of delta operators, and let their indicators and the corresponding compositional inverses be as in the above.
Then, we can derive an Appell relation for multivariate delta Gon\v{c}arov polynomials as follows:
\begin{theorem}[\textbf{Appell relation}] \label{delta-Appell}
Let $\{t_{\mathbf{n}}(\bbx;Z)\}_{\mathbf{n} \in \mathbb{N}^d} $ be the sequence of delta
Gon\v{c}arov polynomials associated to a system $\Delta = (\dd_1, \dd_2, \ldots, \dd_n)$ of delta operators and a grid $Z=\{ z_{\mathbf{n}}: \mathbf{n} \in \mathbb{N}^d \} \subseteq \mathbb{R}^d$. Then
\begin{equation}
\label{equ:Appell_relation_for_delta_Goncarov_polys}
e^{\bbx \cdot \mathbf{g}(\mathbf{y})}
=\sum_{\mathbf{k} \in \mathbb{N}^d}\frac{ t_{\mathbf{k}}(\bbx;Z)}{\mathbf{k}!} \mathbf{y}^{\mathbf{k}}
e^{z_{\mathbf{k}} \cdot \mathbf{g}(\mathbf{y})}.
\end{equation}
\end{theorem}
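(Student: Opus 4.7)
The plan is to derive \eqref{equ:Appell_relation_for_delta_Goncarov_polys} by starting from the basic Appell relation \eqref{basic-GF}, using the linear recursion \eqref{linear-recursion} to re-expand every $p_{\mathbf{n}}(\bbx)$ in the Gon\v{c}arov basis $\{t_{\mathbf{k}}(\bbx;Z)\}$, and then collapsing the resulting double series by a second application of \eqref{basic-GF} at the nodes $z_{\mathbf{k}}$. This turns the claim into a short formal manipulation rather than a genuinely new identity.

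Concretely, by \eqref{basic-GF} one has
\begin{equation*}
e^{\bbx\cdot\mathbf{g}(\mathbf{y})} \;=\; \sum_{\mathbf{n}\in\mathbb{N}^d}\frac{p_{\mathbf{n}}(\bbx)}{\mathbf{n}!}\,\mathbf{y}^{\mathbf{n}},
\end{equation*}
and substituting \eqref{linear-recursion} for $p_{\mathbf{n}}(\bbx)$ yields a double sum over $\mathbf{k}\le\mathbf{n}$. Swapping the order of summation (setting $\mathbf{m}=\mathbf{n}-\mathbf{k}$) and using the identity $\binom{\mathbf{n}}{\mathbf{k}}/\mathbf{n}! \;=\; 1/(\mathbf{k}!\,(\mathbf{n}-\mathbf{k})!)$, the right-hand side factorizes as
\begin{equation*}
\sum_{\mathbf{k}\in\mathbb{N}^d}\frac{t_{\mathbf{k}}(\bbx;Z)}{\mathbf{k}!}\,\mathbf{y}^{\mathbf{k}}\left(\sum_{\mathbf{m}\in\mathbb{N}^d}\frac{p_{\mathbf{m}}(z_{\mathbf{k}})}{\mathbf{m}!}\,\mathbf{y}^{\mathbf{m}}\right).
\end{equation*}
The inner parenthesized sum is exactly \eqref{basic-GF} with $\bbx$ specialized to $z_{\mathbf{k}}$, so it equals $e^{z_{\mathbf{k}}\cdot\mathbf{g}(\mathbf{y})}$, producing \eqref{equ:Appell_relation_for_delta_Goncarov_polys}.

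The only point that merits attention is the legitimacy of the rearrangement of the double series. This, however, is not a real obstacle: both sides of \eqref{equ:Appell_relation_for_delta_Goncarov_polys} are most naturally viewed as elements of $\mathbb{R}[\bbx]\llb\mathbf{y}\rrb$, and for any fixed $\mathbf{N}\in\mathbb{N}^d$ the coefficient of $\mathbf{y}^{\mathbf{N}}$ in the rearranged expression is a finite sum over the pairs $(\mathbf{k},\mathbf{m})$ with $\mathbf{k}+\mathbf{m}=\mathbf{N}$, so the interchange is purely algebraic. An equivalent route, worth mentioning as a sanity check, is to apply Proposition \ref{expansion} formally to the series $e^{\bbx\cdot\mathbf{g}(\mathbf{y})}$, after observing that the identity $\dd_i p_{\mathbf{n}}=n_i p_{\mathbf{n}-\mathbf{e}_i}$ forces each $\dd_i$ to act on $e^{\bbx\cdot\mathbf{g}(\mathbf{y})}$ as multiplication by $y_i$, so that $\dd_1^{k_1}\cdots\dd_d^{k_d}(e^{\bbx\cdot\mathbf{g}(\mathbf{y})})=\mathbf{y}^{\mathbf{k}}\,e^{\bbx\cdot\mathbf{g}(\mathbf{y})}$; evaluating at $\bbx=z_{\mathbf{k}}$ reproduces precisely the factors on the right of \eqref{equ:Appell_relation_for_delta_Goncarov_polys}.
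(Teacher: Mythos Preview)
Your argument is correct and follows exactly the same route as the paper's proof: start from \eqref{basic-GF}, substitute the linear recursion \eqref{linear-recursion}, swap the order of summation with the change of index $\mathbf{m}=\mathbf{n}-\mathbf{k}$, and recognize the inner sum as \eqref{basic-GF} evaluated at $z_{\mathbf{k}}$. Your additional remarks on the legitimacy of the rearrangement in $\mathbb{R}[\bbx]\llb\mathbf{y}\rrb$ and the alternative derivation via Proposition~\ref{expansion} are sound and go slightly beyond what the paper spells out, but the core proof is identical.
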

\begin{proof}
Combining Eqs.~\eqref{linear-recursion} and \eqref{basic-GF}, we have
\begin{equation*}
\begin{split}
e^{\bbx \cdot \mathbf{g}(\mathbf{y})}
= \sum_{\mathbf{n} \in \mathbb{N}^d}   p_{\mathbf{n}}(\bbx)
\frac{\mathbf{y}^{\mathbf{n}}}{\mathbf{n}!}
    & = \sum_{\mathbf{n} \in \mathbb{N}^d} \frac{1}{\mathbf{n}!} \left(
\sum_{ \mathbf{k} \leq \mathbf{n} }  \binom{\mathbf{n}}{\mathbf{k}}
p_{\mathbf{n}-\mathbf{k}} (z_{\mathbf{k}})t_{\mathbf{k}}(\bbx; Z) \right)
\mathbf{y}^{\mathbf{n}} \\
& = \sum_{\mathbf{k} \in \mathbb{N}^d } \left(
 \frac{1}{\mathbf{k}!}   t_{\mathbf{k}}(\bbx; Z)  \mathbf{y}^{\mathbf{k}}
 \cdot \sum_{ \mathbf{m}=\mathbf{n}-\mathbf{k}}
 \frac{p_{\mathbf{m}}(z_{\mathbf{k}}) }{\mathbf{m}!} \mathbf{y}^{\mathbf{m}} \right)  \\
 & = \sum_{\mathbf{k} \in \mathbb{N}^d }
 \frac{ t_{\mathbf{k}}(\bbx; Z)    }{\mathbf{k}!}    \mathbf{y}^{\mathbf{k}}
  e^{z_{\mathbf{k}} \cdot \mathbf{g}(\mathbf{y})},
\end{split}
\end{equation*}
which concludes the proof.
\end{proof}
Note that an equivalent version of the Appell relation \eqref{equ:Appell_relation_for_delta_Goncarov_polys} is
\begin{equation} \label{alternate}
e^{\mathbf{x} \cdot \mathbf{y}} =
 \sum_{\mathbf{k} \in \mathbb{N}^d } \left(
 \frac{ t_{\mathbf{k}}(\bbx; Z)    }{\mathbf{k}!}   \mathbf{f}(\mathbf{y})^{\mathbf{n}}
 \cdot e^{z_{\mathbf{k} } \cdot \mathbf{y}} \right).
 \end{equation}
This will be used in the proof of the following theorem, which is the main result of the present section and
characterizes the delta Gon\v{c}arov polynomials that are
of binomial type in terms of the ``geometry'' of the underlying interpolation grid, by showing that the latter is actually
a linear transformation of the set of lattice points  $\mathbb{N}^d$. We will refer to such \gonc polynomials as \textit{delta Abel polynomials}, as in the univariate case they are exactly the Abel polynomials
$x(x-nb)^{n-1}$, for which the interpolation grid is an arithmetic progression of the form $0, b, 2b, \ldots$ for some $b \in \mathbb R$.
\begin{theorem} \label{Binomial type}
Given a system of delta operators $(\dd_1, \dd_2, \dots, \dd_d)$ and a grid
$Z=\{ z_{\mathbf{k}}: \mathbf{k} \in \mathbb{N}^d \} \subseteq \mathbb{R}^d$.
Then the associated set of delta Gon\v{c}arov polynomials
  $\{t_{\mathbf{n}}(\bbx;Z)\}_{\mathbf{n} \in \mathbb{N}^d} $ is of binomial type if and only if
there is a $d \times d$ matrix $A$ for which
$z_{\mathbf{k}} = A \mathbf{k}$ for all $\mathbf{k}$. (Here we view
$\mathbf{k} \in \mathbb{N}^d$ as a column vector). In other words, the set of nodes is the image of
the set of lattice  points $\mathbb{N}^d$ under the matrix $A$.
\end{theorem}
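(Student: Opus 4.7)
The plan is to pivot between two generating functions for the same sequence: the Appell relation of Theorem \ref{delta-Appell} and Parrish's characterization of binomial type sequences (item \ref{item:Parrish_(2)}), which states that $\{q_{\mathbf{k}}\}$ is of binomial type iff $\sum_{\mathbf{k}} q_{\mathbf{k}}(\mathbf{x})\mathbf{u}^{\mathbf{k}}/\mathbf{k}! = e^{\mathbf{x}\cdot \mathbf{h}(\mathbf{u})}$ for some admissible $\mathbf{h}$. Both directions will come from an invertible formal change of variables that turns the $\mathbf{k}$-dependent exponential $e^{z_{\mathbf{k}}\cdot \mathbf{g}(\mathbf{y})}$ appearing in the Appell relation into something that factors across $\mathbf{k}$.

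For the ``if'' direction, assume $z_{\mathbf{k}} = A\mathbf{k}$. Substituting into Eq.~\eqref{equ:Appell_relation_for_delta_Goncarov_polys} gives
\begin{equation*}
e^{\mathbf{x}\cdot \mathbf{g}(\mathbf{y})}
=\sum_{\mathbf{k}\in \mathbb{N}^d}\frac{t_{\mathbf{k}}(\mathbf{x};Z)}{\mathbf{k}!}\prod_{i=1}^d \bigl(y_i\, e^{(A^{T}\mathbf{g}(\mathbf{y}))_i}\bigr)^{k_i}.
\end{equation*}
Setting $u_i := y_i\, e^{(A^{T}\mathbf{g}(\mathbf{y}))_i}$ defines a map $\mathbf{u}=\mathbf{u}(\mathbf{y})$ whose Jacobian at $\mathbf{0}$ is the identity, hence it is an admissible substitution with a compositional inverse $\mathbf{y}=\mathbf{y}(\mathbf{u})$. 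Defining $\mathbf{h}(\mathbf{u}) := \mathbf{g}(\mathbf{y}(\mathbf{u}))$ produces an admissible system (composition of two admissible systems), and the displayed identity becomes $\sum_{\mathbf{k}} t_{\mathbf{k}}(\mathbf{x};Z)\mathbf{u}^{\mathbf{k}}/\mathbf{k}! = e^{\mathbf{x}\cdot \mathbf{h}(\mathbf{u})}$. By item \ref{item:Parrish_(2)} this shows $\{t_{\mathbf{k}}\}$ is of binomial type.

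For the ``only if'' direction, take such an admissible $\mathbf{h}$ and substitute $\mathbf{u}=\mathbf{h}^{-1}(\mathbf{g}(\mathbf{y}))$ (which exists since $\mathbf{h}$ is admissible) in the binomial-type generating function, then subtract the Appell relation to obtain
\begin{equation*}
\sum_{\mathbf{k}\in \mathbb{N}^d} \frac{t_{\mathbf{k}}(\mathbf{x};Z)}{\mathbf{k}!}\bigl[(\mathbf{h}^{-1}(\mathbf{g}(\mathbf{y})))^{\mathbf{k}} - \mathbf{y}^{\mathbf{k}}\, e^{z_{\mathbf{k}}\cdot \mathbf{g}(\mathbf{y})}\bigr] = 0
\end{equation*}
in $\mathbb{R}[\mathbf{x}]\llb \mathbf{y}\rrb$. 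For each fixed $\mathbf{y}^{\mathbf{m}}$ only finitely many $\mathbf{k}$ contribute (both $\mathbf{h}^{-1}\circ\mathbf{g}$ and $\mathbf{y}$ have zero constant term, so the order in $\mathbf{y}$ grows with $|\mathbf{k}|$), so each coefficient of $\mathbf{y}^{\mathbf{m}}$ is a finite $\mathbb{R}$-linear combination of the $t_{\mathbf{k}}(\mathbf{x})$. Since the $\{t_{\mathbf{k}}\}$ are $\mathbb{R}$-linearly independent (shown after Theorem \ref{biorthogonal} via the biorthogonality relations), each bracket vanishes, giving $(\mathbf{h}^{-1}(\mathbf{g}(\mathbf{y})))^{\mathbf{k}} = \mathbf{y}^{\mathbf{k}} e^{z_{\mathbf{k}}\cdot \mathbf{g}(\mathbf{y})}$ for every $\mathbf{k}$. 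Applying this for $\mathbf{k}=\mathbf{e}_i$ yields $(\mathbf{h}^{-1}\circ\mathbf{g})_i(\mathbf{y}) = y_i e^{z_{\mathbf{e}_i}\cdot \mathbf{g}(\mathbf{y})}$, and then raising to the powers $k_i$ and comparing with the general $\mathbf{k}$ identity forces $e^{z_{\mathbf{k}}\cdot \mathbf{g}(\mathbf{y})} = e^{(\sum_i k_i z_{\mathbf{e}_i})\cdot \mathbf{g}(\mathbf{y})}$. Taking logarithms and using that the linear parts of $g_1,\dots,g_d$ are linearly independent (since the Jacobian of $\mathbf{g}$ at $\mathbf{0}$ is invertible) yields $z_{\mathbf{k}} = \sum_i k_i z_{\mathbf{e}_i} = A\mathbf{k}$, where $A$ is the matrix whose $i$-th column is $z_{\mathbf{e}_i}$.

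The main obstacle I anticipate is the coefficient-matching step in the necessity direction: one must justify that an identity in the mixed ring $\mathbb{R}[\mathbf{x}]\llb\mathbf{y}\rrb$ really does separate into per-$\mathbf{k}$ identities in $\mathbb{R}\llb\mathbf{y}\rrb$, which requires both the finiteness of contributions at each order in $\mathbf{y}$ and the fact that the basic-sequence-style polynomials $\{t_{\mathbf{k}}\}$ span $\mathbb{R}[\mathbf{x}]$; the rest is a direct algebraic manipulation using the admissibility of $\mathbf{g}$ and $\mathbf{h}$.
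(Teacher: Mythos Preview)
Your proof is correct and follows essentially the same approach as the paper: both directions pivot on comparing the Appell relation with Parrish's exponential characterization of binomial-type sequences and then matching coefficients via the linear independence of the $t_{\mathbf{k}}$'s. The only cosmetic differences are that the paper works with the alternate Appell relation \eqref{alternate} (in terms of $\mathbf f$ rather than $\mathbf g$) and, in the necessity step, compares the identities for $\mathbf{k}$ and $\mathbf{k}+\mathbf{e}_j$ to see that $z_{\mathbf{k}+\mathbf{e}_j}-z_{\mathbf{k}}$ is constant, whereas you compare the general $\mathbf{k}$ identity to the product of the $\mathbf{e}_i$ identities---equivalent maneuvers leading to the same conclusion.
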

\begin{proof}
We split the proof into two parts.
\vskip 0.1cm
\noindent {\bf Necessity.} Assume the sequence of delta Gon\v{c}arov  polynomials
$\{ t_{\mathbf{k}}(\bbx; Z)\}$ is of binomial type.
This is equivalent to the existence of
an admissible system of formal power series $\mathbf{g}=(g_1, g_2, \dots, g_d)$ such that
\begin{eqnarray} \label{binomial-1}
\sum_{\mathbf{k} \in \mathbb{N}^d}   t_{\mathbf{k}}(\bbx;Z)
\frac{\mathbf{y}^{\mathbf{k}}}{\mathbf{k}!}
= e^{\bbx \cdot \mathbf{g}(\mathbf{y})}.
\end{eqnarray}

Let $\mathbf{h}=(h_1,h_2, \dots, h_d) \in \mathbb{R} \llb x \rrb $  be the compositional inverse of
$\mathbf{g}$.  Substituting $\mathbf{y}$ with $\mathbf{h}(\mathbf{y})= (h_1(\mathbf{y}), h_2(\mathbf{y}), \dots,
h_d(\mathbf{y}))$, we obtain
\begin{eqnarray} \label{binomial-2}
\sum_{\mathbf{k} \in \mathbb{N}^d}  \frac{  t_{\mathbf{k}}(\bbx;Z)}{\mathbf{k}!}
 \mathbf{h}( \mathbf{y})^{\mathbf{k}}
= e^{\bbx \cdot \mathbf{y}}.
\end{eqnarray}
Comparing this with the alternate Appell relation  \eqref{alternate}
and using the uniqueness of the coefficients,
we have
\begin{equation} \label{h-and-k}
\mathbf{h}(\mathbf{y})^{\mathbf{k}} = \mathbf{f}(\mathbf{y})^{\mathbf{k}} \cdot e^{z_{\mathbf{k}} \cdot \mathbf{y}}
\qquad \text{ for all } \mathbf{k} \in \mathbb{N}^d .
\end{equation}
Taking the same equation with index $\mathbf{k} + \mathbf{e}_j$
 and dividing by the first we get
$$
h_j(\mathbf{y}) = f_j(\mathbf{y}) \cdot e^{(z_{\mathbf{k}+\mathbf{e}_j} - z_{\mathbf{k}}) \cdot \mathbf{y}},
$$
which  holds for all $\mathbf{k} \in \mathbb{N}^d$ and $1\leq j\leq d$.
 It follows that $z_{\mathbf{k}+\mathbf{e}_j} - z_{\mathbf{k}}  $ is
a constant vector independent of $\mathbf{k}$. Denote this constant vector by $\mathbf{t}_j$.
It follows that
$$
z_{\mathbf{k}}= z_{\mathbf{0}} + k_1 \mathbf{t}_1 + k_2 \mathbf{t}_2+ \cdots + k_d \mathbf{t}_d.
$$
So, taking $\mathbf{k}=\mathbf{0}$ in Eq.~\eqref{h-and-k}, we get
$e^{z_{\mathbf{0}} \cdot \mathbf{y}  }=1$,
which implies $z_{\mathbf{0}}=\mathbf{0}$. Hence $z_{\mathbf{k}}$ is of the form $A \mathbf{k}$,
where $A$ is the matrix whose columns are $\mathbf{t}_1, \mathbf{t}_2, \ldots, \mathbf{t}_d$.
\vskip 0.1cm
\noindent {\bf Sufficiency.}
Conversely, assume  $Z$ is a linear transformation of  $\mathbb{N}^d$ by a $d \times d$ matrix $A$.  Then
$$
z_{\mathbf{k}}=  k_1 \mathbf{t}_1 + k_2 \mathbf{t}_2+ \cdots + k_d \mathbf{t}_d,
$$
where $\mathbf{t}_1, \mathbf{t}_2, \ldots, \mathbf{t}_d$ are the column vectors of
 the matrix $A$.
The Appell relation for $t_{\mathbf{k}}(\bbx)$ then  becomes
\begin{equation} \label{Appell-2}
\begin{split}
e^{\mathbf{x} \cdot \mathbf{y}}
    = & \sum_{\mathbf{k} \in \mathbb{N}^d } \left(
 \frac{ t_{\mathbf{k}}(\bbx; Z)    }{\mathbf{k}!}   \mathbf{f}(\mathbf{y})^{\mathbf{k}}
 \cdot e^{ ( k_1 \mathbf{t}_1 + k_2 \mathbf{t}_2+ \fixed[0.2]{ \text{ }}\cdots\fixed[0.2]{ \text{ }} + k_d \mathbf{t}_d)  \cdot \mathbf{y}} ) \right)  \nonumber  \\
    =&\sum_{\mathbf{k} \in \mathbb{N}^d } \left(
  \frac{ t_{\mathbf{k}}(\bbx; Z)    }{\mathbf{k}!}   \mathbf{h}(\mathbf{y})^{\mathbf{k}} \right),
\end{split}
\end{equation}
where we set $\mathbf{h}(\mathbf{y}) =(h_1(\mathbf{y}), h_2(\mathbf{y}), \dots, h_d(\mathbf{y}))$,
 with
 $$
 h_i(\mathbf{y}) = f_i (\mathbf{y}) e^{\mathbf{t}_i \cdot \mathbf{y}}.
 $$
Since Eq.~\eqref{Appell-2} agrees with Eq.~\eqref{Appell-basic}, it only remains to
show that the system  $\mathbf{h}(\mathbf{y})$  is admissible.
For this, note that
$$
\frac{\partial h_i}{\partial y_j} = \frac{ \partial f_i}{\partial y_j} \cdot e^{\mathbf{t}_i \cdot \mathbf{y}} + f_i(\mathbf{y}) t_{i,j} e^{\mathbf{t}_i \cdot \mathbf{y}}.
$$
But $f_i(\mathbf{0})=0$, hence $\frac{\partial h_i}{\partial y_j}(\mathbf{0})=
\frac{\partial f_i}{\partial y_j} (\mathbf{0})$, and then
$J_h =J_f$  is non-singular.
Thus the sequence $\{t_{\mathbf{n}}(\bbx; Z)\}_{\mathbf{n} \in \mathbb{N}^d}$ is of binomial type.
\end{proof}
Given a system of delta operators, Theorem \ref{Binomial type} says that  if the defining grid  is a linear transformation of $\mathbb{N}^d$,
then  the associated sequence of delta Gon\v{c}arov polynomials
is of binomial type. Therefore, these polynomials are the basic sequence of some system of delta operators.
More explicitly, we have:
\begin{proposition} \label{shift-prop}
Let $\Delta = (\dd_1, \dd_2,\dots, \dd_d)$ be a system of delta operators. Let $Z$ be a linear transformation of $\mathbb{N}^d$ by a
$d \times d$ matrix $A$, and $\mathbf{t}_i$ the $i$-th column vector of $A$.
Denote by $\{ t_{\mathbf{n}}(\bbx;Z) \}_{\mathbf{n} \in \mathbb{N}^d}$ the sequence of delta Gon\v{c}arov
polynomials associated with $\Delta$ and $Z$.
Then $\{ t_{\mathbf{n}}(\bbx;Z)\}_{\mathbf{n} \in \mathbb{N}^d} $ is the basic sequence of the system of delta operators
$(\ss_1, \ss_2, \dots, \ss_d)$, with $\ss_i =\dd_i \EE_{\mathbf{t}_i}$ for all $i$.
\end{proposition}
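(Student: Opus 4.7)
The plan is to leverage almost all the work done in the sufficiency direction of Theorem~\ref{Binomial type}: the desired conclusion is a reinterpretation of the Appell relation produced there, together with the characterization of basic sequences via admissible indicator systems.

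First, I would pin down the indicator of each $\ss_i$. The shift operator $\EE_{\mathbf{t}_i}$ is shift-invariant, and its Taylor-series expansion yields the indicator $e^{\mathbf{t}_i \cdot \mathbf{y}}$. Because the algebra of shift-invariant operators is commutative and composition corresponds to multiplication of indicators (Parrish's representation, cf.~Eq.~\eqref{equ:Parrish_representation}), the operator $\ss_i = \dd_i \EE_{\mathbf{t}_i}$ is shift-invariant with indicator
$$
h_i(\mathbf{y}) = f_i(\mathbf{y})\, e^{\mathbf{t}_i \cdot \mathbf{y}},
$$
which is precisely the power series $h_i$ appearing in the sufficiency proof of Theorem~\ref{Binomial type}.

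Next, I would verify that $(\ss_1, \dots, \ss_d)$ is itself a system of delta operators by applying Theorem~\ref{admissible} to the indicator system $\mathbf{h} = (h_1, \dots, h_d)$. Since $f_i(\mathbf{0}) = 0$, we have $h_i(\mathbf{0}) = 0$ for every $i$; and the direct computation already carried out in the sufficiency proof showed that $J_h = J_f$, which is non-singular by Theorem~\ref{admissible} applied to the original system $\Delta$. Both hypotheses of Theorem~\ref{admissible} are thus satisfied.

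Finally, I would invoke the Appell-type identity
$$
e^{\mathbf{x} \cdot \mathbf{y}} = \sum_{\mathbf{k} \in \mathbb{N}^d} \frac{t_{\mathbf{k}}(\bbx; Z)}{\mathbf{k}!}\, \mathbf{h}(\mathbf{y})^{\mathbf{k}}
$$
established in the sufficiency part of Theorem~\ref{Binomial type}, and compare it with the Appell relation~\eqref{Appell-basic} characterizing the basic sequence of a system of delta operators. By the uniqueness of the basic sequence associated with a given system (Parrish \cite{parrish}), the sequence $\{t_{\mathbf{n}}(\bbx; Z)\}_{\mathbf{n} \in \mathbb{N}^d}$ must coincide with the basic sequence of the system whose indicators are $h_1, \dots, h_d$, namely $(\ss_1, \dots, \ss_d)$. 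There is no substantive obstacle; if one preferred a direct verification avoiding uniqueness, the only real alternative would be to check property~(3) of a basic sequence, $\ss_i(t_{\mathbf{n}}) = n_i\, t_{\mathbf{n}-\mathbf{e}_i}$, by extracting coefficients of $\mathbf{y}^{\mathbf{k}}$ on both sides after applying $\ss_i$ to the Appell identity, using that $\ss_i$ has indicator $h_i$.
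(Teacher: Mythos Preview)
Your proposal is correct and follows exactly one of the two routes the paper itself indicates: the paper simply remarks that the proposition ``can be seen by writing down the orthogonality conditions for the system $(\ss_1,\ss_2,\dots,\ss_d)$, or from the proof of Theorem~\ref{Binomial type},'' and you have carried out the latter in full detail. The former route is even shorter: since $\ss_1^{k_1}\cdots\ss_d^{k_d}=\dd_1^{k_1}\cdots\dd_d^{k_d}\,\EE_{k_1\mathbf{t}_1+\cdots+k_d\mathbf{t}_d}=\dd_1^{k_1}\cdots\dd_d^{k_d}\,\EE_{z_{\mathbf{k}}}$, the defining conditions for the basic sequence of $(\ss_1,\dots,\ss_d)$ coincide literally with the interpolation conditions~\eqref{interpolation-condition} for $t_{\mathbf{n}}(\bbx;Z)$.
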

This can be seen by writing down the orthogonality
conditions for the system $(\ss_1, \ss_2, \dots, \ss_d)$, or from the  proof of Theorem \ref{Binomial type}.
\section{Closed formulas for bivariate delta Abel polynomials}
In the rest of the paper  we will show how to derive compact formulas for  a family of
multivariate delta Abel polynomials using the properties  of delta operators.

The systems of delta operators considered here are
\textit{separable}, in the sense that
$\dd_i = D_i \LL_i$ for each $i$, where $\LL_i$ is an invertible  shift-invariant operator acting only on $x_i$, i.e.,
the indicator of $\LL_i$ is a formal power series in the variable $x_i$ only.
The classical  case where $\dd_i=D_i$ is a special example in this family.

We shall give a closed formula for the multivariate delta Abel polynomials, which are
delta Gon\v{c}arov polynomials  whose defining  grid  is a linear transformation
of  $\mathbb{N}^d$ by a $d\times d$ matrix.
From Theorem \ref{Binomial type}, these are exactly the delta Gon\v{c}arov polynomials
that  are of binomial type.

For simplicity and clarity, in this section we describe the approach and the results in
the bivariate case, where the variables are $x$ and $y$.
The general case is dealt with in a similar manner in the next section.

In \cite{[Lorentz-yan]}, it was conjectured and then verified that, for a $2\times 2$ matrix
\begin{equation}\label{linear grid matrix}
A =
\left(
\begin{array}{rr}
a_{11} & a_{12}\\
a_{21} & a_{22}
\end{array}
\right)\!,
\end{equation}
the classical bivariate Abel polynomials
on the grid
\begin{equation}\label{linear grid}
Z =\{(x_{i,j}, y_{i,j})\}_{(i,j) \in \mathbb N^2},\ \ \text{where }(x_{i,j}, y_{i,j}) = A (i, j)^t \ \ \text{for all }i, j \in \mathbb N,
\end{equation}
are given by
\begin{equation} \label{classical}
\left[ (x-x_{0,n})(y-y_{m,0}) - x_{0,n}y_{m,0}\right]
(x-x_{m,n})^{m-1}(y-y_{m,n})^{n-1}.
\end{equation}
%
We will derive this formula using the techniques of delta operators. For that we will need
the concept of bivariate Pincherle derivatives and their properties, see \cite{Mullin-Rota, RoKaOd73}.
These operators were also called ``Lie derivatives'' in \cite{Garsia-Joni, Joni78}.

Let $\LL$ be a shift-invariant operator on $\mathbb{R}[x,y]$.
Then the Pincherle derivatives of $\LL$ relative $x$ and $y$ are, respectively, the linear operators defined by taking, for every $p(x,y) \in \mathbb R[x,y]$,
\begin{equation*}
\LL_x'(p(x,y)) = \LL(x p(x,y)) -x\LL(p(x,y))
\end{equation*}
and
\begin{equation*}
\LL_y'(p(x,y)) = \LL(y p(x,y)) -y\LL(p(x,y)).
\end{equation*}
If $\LL$ is shift-invariant, then so are $\LL_x'$ and $\LL_y'$.
In addition, the indicator function of $\LL_x$ (resp. $\LL_y$) is $ \partial f(x,y)/\partial x$
(resp. $\partial f(x,y)/\partial y$), where $f(x,y)$ is the indicator of $\LL$.
Consequently, we have:
\begin{enumerate}
\item
$(D_x)_x' = \mathfrak{I}$, the identity operator,  and  $(D_x)_y' = 0$;
\item
$(\LL \mathfrak{S})_x' = (\LL)_x'\mathfrak{S} + \LL(\mathfrak{S})_x'$ for any shift-invariant operator $\mathfrak{S}$;
\item
$(\LL D_x)_x' = \LL + (\LL)_x' D_x$;
\item
$(\EE_{(a,b)})_x' = a\EE_{(a,b)}$, $(\EE_{(a,b)})_y' = b\EE_{(a,b)}$.
\end{enumerate}
%
Two formulas from \cite{RoKaOd73} we will be using are based on the fact
that any univariate delta operator can be written as $\dd = D\LL$,
where $\LL$ is an invertible shift-invariant operator. Let $\dd = D\LL$ be a delta
operator with basic sequence $\{p_n\}_{n=0}^\infty$. Then
\begin{equation}\label{pincherle equation}
p_n(x) = \dd' \LL^{-n-1}(x^n)
\end{equation}
and
\begin{equation}\label{power equation}
p_n(x) = x\LL^{-n}(x^{n-1}).
\end{equation}
The crucial step enabling the use of delta operators is Proposition \ref{shift-prop}, which
allows us to solve a Gon\v{c}arov interpolation problem on the type of linear  grid used here
by  determining a basic sequence.
\begin{theorem}\label{bivariate Abel}
Let $(\dd_x, \dd_y)$ with $\dd_x = D_x\LL_x$, $\dd_y =  D_y\LL_y$
be a separable pair of delta operators, and let $Z$ be a linear transformation of $\mathbb{N}^2$ by a $2 \times 2$ matrix $A$ as in
Eqs.~\eqref{linear grid matrix} and \eqref{linear grid}. Denote by $(t_{m,n}((x,y); Z))_{(m,n) \in \mathbb N^2}$ the sequence of Abel polynomials associated with the pair $(\dd_x, \dd_y)$
and the grid $Z$, and fix $m, n \in \mathbb N$. The following holds:
\begin{enumerate}[label={\rm(\roman{*})}]
\item\label{item:theorem_explicit_formulas(i)} Let $p_m$ and $q_n$ be the (univariate) basic sequences of $\dd_x$ and $\dd_y$ respectively. Then $t_{m,n}((x,y); Z)$ is given by
\begin{equation*}\label{nodes}
\left|
\begin{array}{cc}
\! x - x_{0,n} & y_{m,0}\!\\
\!x_{0,n} & y - y_{m,0}\!
\end{array}
\right|
\frac{p_m(x-x_{m,n})}{x-x_{m,n}}\cdot\frac{q_n(y-y_{m,n})}{y-y_{m,n}},
\end{equation*}
where $x_{i,j}=a_{1,1}i+a_{1,2} j $ and $y_{i,j}=a_{2,1} i + a_{2,2} j$ for all $0\leq i \leq m$ and $0 \leq j \leq n$.
\item\label{item:theorem_explicit_formulas(ii)} Let  $\ss_1 = D_x\LL_x\EE_{(a_{1,1}, a_{2,1})}$ and $\ss_2 = D_y\LL_y\EE_{(a_{1,2},a_{2,2})}$. Then
\begin{equation}\label{ss_1}
t_{m,n}((x,y); Z) =
J(\ss_1, \ss_2) \fixed[0.3]{ \text{ }}
\EE_{a_{1,1}, a_{2,1}}^{-m-1} \EE_{a_{1,2},a_{2,2}}^{-n-1}
\LL_x^{-m-1} \LL_y^{-n-1}(x^m y^n),
\end{equation}
where
\begin{equation}
\label{equ:jacobian_with_sij}
J(\ss_1, \ss_2) =
\left|
\begin{array}{cc}
\! (\ss_1)_x' & (\ss_1)_y' \!\\
\! (\ss_2)_x' & (\ss_2)_y' \!
\end{array}
\right|.
\end{equation}
\item\label{item:theorem_explicit_formulas(iii)} $t_{m,n}((x,y); Z) = T_{m,n}(x^m y^n)$, where $T_{m,n}$ is the operator
\begin{equation}
\label{equ:operator_Tmn}
\left|
\begin{array}{cc}
\! (\dd_x)_x' +a_{1,1}\dd_x  &  a_{2,1}\dd_x \!\\
\! a_{1,2}\dd_y  &  (\dd_y)_y' +a_{2,2}\dd_y \!
\end{array}
\right|
\EE_{(a_{1,1}, a_{2,1})}^{-m} \EE_{(a_{1,2},a_{2,2})}^{-n}
\LL_x^{-m-1} \LL_y^{-n-1}.
\end{equation}
\end{enumerate}
\end{theorem}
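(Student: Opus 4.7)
My approach is to peel the three statements in order $\ref{item:theorem_explicit_formulas(ii)} \Rightarrow \ref{item:theorem_explicit_formulas(iii)} \Rightarrow \ref{item:theorem_explicit_formulas(i)}$, using as the main lever the identification of $\{t_{m,n}(\cdot;Z)\}$ as a basic sequence of a separable system. By Proposition~\ref{shift-prop}, $\{t_{m,n}(\cdot;Z)\}$ is the basic sequence of the system $(\ss_1,\ss_2)$ with $\ss_1=\dd_x\EE_{(a_{1,1},a_{2,1})}$ and $\ss_2=\dd_y\EE_{(a_{1,2},a_{2,2})}$. Writing $\ss_i=D_{x_i}\mathfrak{S}_i$ with $\mathfrak{S}_1=\LL_x\EE_{(a_{1,1},a_{2,1})}$ and $\mathfrak{S}_2=\LL_y\EE_{(a_{1,2},a_{2,2})}$ both invertible and shift-invariant, the plan is to prove the bivariate Rodrigues-type formula
\[
t_{m,n}(x,y;Z) = J(\ss_1,\ss_2)\,\mathfrak{S}_1^{-m-1}\mathfrak{S}_2^{-n-1}(x^my^n),
\]
which gives \ref{item:theorem_explicit_formulas(ii)} once we factor the commuting shifts out of the $\mathfrak{S}_i^{-m-1}$. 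This Rodrigues identity is proved by verifying the defining properties of the basic sequence: it has the correct degree, vanishes at the origin for $(m,n)\neq(0,0)$, and satisfies $\ss_i(\cdot)=k_i(\cdot)_{m,n-\mathbf{e}_i}$, the last point using only the Pincherle product rule together with $\ss_i = D_{x_i}\mathfrak{S}_i$ and the univariate Rodrigues identities \eqref{pincherle equation}--\eqref{power equation} applied separately in each coordinate.

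For part \ref{item:theorem_explicit_formulas(iii)}, I expand $J(\ss_1,\ss_2)$ using the Pincherle derivative rules. The separability assumption gives that the indicator of $\dd_x$ involves only $x$, so $(\dd_x)'_y=0$, and similarly $(\dd_y)'_x=0$. Combined with $(\EE_{(a,b)})'_x=a\EE_{(a,b)}$ and $(\EE_{(a,b)})'_y=b\EE_{(a,b)}$, the product rule yields
\[
(\ss_1)'_x=\bigl[(\dd_x)'_x+a_{1,1}\dd_x\bigr]\EE_{(a_{1,1},a_{2,1})},\quad (\ss_1)'_y=a_{2,1}\dd_x\EE_{(a_{1,1},a_{2,1})},
\]
and the analogous expressions for $\ss_2$. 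Substituting into the $2\times 2$ Jacobian and factoring the two shift operators out, the determinant becomes exactly that in~\eqref{equ:operator_Tmn}, multiplied by $\EE_{(a_{1,1},a_{2,1})}\EE_{(a_{1,2},a_{2,2})}$; combining with $\mathfrak{S}_1^{-m-1}\mathfrak{S}_2^{-n-1}=\EE_{(a_{1,1},a_{2,1})}^{-m-1}\EE_{(a_{1,2},a_{2,2})}^{-n-1}\LL_x^{-m-1}\LL_y^{-n-1}$ converts \ref{item:theorem_explicit_formulas(ii)} into \ref{item:theorem_explicit_formulas(iii)}.

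For part \ref{item:theorem_explicit_formulas(i)}, I apply $T_{m,n}$ to $x^m y^n$. Because $\LL_x, \LL_y$ act separately on $x,y$, \eqref{power equation} gives $\LL_x^{-m-1}(x^m)=p_{m+1}(x)/x$ and $\LL_y^{-n-1}(y^n)=q_{n+1}(y)/y$; the shifts then carry $x\mapsto x-x_{m,n}$ and $y\mapsto y-y_{m,n}$. Writing $f(x)=p_{m+1}(x-x_{m,n})/(x-x_{m,n})=\LL_x^{-m-1}((x-x_{m,n})^m)$ (and likewise $g(y)$), shift-invariance together with \eqref{pincherle equation} gives $(\dd_x)'_x f=p_m(x-x_{m,n})$, while \eqref{power equation} gives $\dd_x f=m\,p_m(x-x_{m,n})/(x-x_{m,n})$; symmetric identities hold in $y$. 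Expanding the $2\times 2$ determinant in \eqref{equ:operator_Tmn} and factoring $\frac{p_m(x-x_{m,n})}{x-x_{m,n}}\cdot\frac{q_n(y-y_{m,n})}{y-y_{m,n}}$ out produces a scalar polynomial
\[
(x-x_{m,n})(y-y_{m,n})+a_{1,1}m(y-y_{m,n})+a_{2,2}n(x-x_{m,n})+(a_{1,1}a_{2,2}-a_{1,2}a_{2,1})mn.
\]
Using $x_{m,n}=ma_{1,1}+na_{1,2}$, $y_{m,n}=ma_{2,1}+na_{2,2}$, $x_{0,n}=na_{1,2}$, $y_{m,0}=ma_{2,1}$, completing the square shows this equals $(x-x_{0,n})(y-y_{m,0})-x_{0,n}y_{m,0}$, giving the determinant in \ref{item:theorem_explicit_formulas(i)}.

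\textbf{Main obstacle.} The delicate step is the establishment of the bivariate Rodrigues formula used at the outset of the argument for \ref{item:theorem_explicit_formulas(ii)}: a priori this is not stated in the excerpt, and although it can be verified by checking the three basic-sequence axioms with the help of the Pincherle product rule, getting the bookkeeping right for the Jacobian applied in conjunction with noncommuting degree-changing operators $D_{x_i}$ and invertible $\mathfrak{S}_i$ requires careful use of the identity $(D_{x_i}\mathfrak{S}_i)'_{x_i}=\mathfrak{S}_i+(\mathfrak{S}_i)'_{x_i}D_{x_i}$ from property (3) of Pincherle derivatives.
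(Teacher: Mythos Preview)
Your proposal is correct and follows essentially the same strategy as the paper: identify $\{t_{m,n}\}$ as the basic sequence of $(\ss_1,\ss_2)$ via Proposition~\ref{shift-prop}, verify a Rodrigues/Jacobian formula by checking the three basic-sequence axioms with the Pincherle product rule, and then read off the closed form. The only difference is ordering: the paper first shows \ref{item:theorem_explicit_formulas(ii)}$\Leftrightarrow$\ref{item:theorem_explicit_formulas(iii)} and then checks the basic-sequence axioms directly for the more explicit form \ref{item:theorem_explicit_formulas(iii)} (obtaining \ref{item:theorem_explicit_formulas(i)} as a byproduct of the vanishing-at-origin verification), whereas you check the axioms for \ref{item:theorem_explicit_formulas(ii)} and then derive \ref{item:theorem_explicit_formulas(iii)} and \ref{item:theorem_explicit_formulas(i)} separately.
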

\begin{proof}
The proof consists of first showing that \ref{item:theorem_explicit_formulas(ii)} and \ref{item:theorem_explicit_formulas(iii)} are equivalent.
Then we prove that \ref{item:theorem_explicit_formulas(iii)} yields the basic sequence of the pair
$(\ss_1, \ss_2)$, which, by Proposition \ref{shift-prop},  is the same as
the bivariate Abel polynomial  associated with the pair $(\dd_x,\dd_y)$ and the grid $Z$.
Lastly, we use \eqref{pincherle equation} and \eqref{power equation} to derive \ref{nodes}.
\vskip 0.1cm
\noindent \textbf{Step 1.} \ref{item:theorem_explicit_formulas(ii)} and \ref{item:theorem_explicit_formulas(iii)}  are equivalent. We calculate
\begin{equation*}
\begin{split}
(\ss_1)_x' & = \EE_{(a_{1,1},a_{2,1})} [(\dd_x)_x' + a_{1,1}\dd_x]; \\
(\ss_1)_y' & = a_{2,1} \EE_{(a_{1,1}, a_{2,1})}  \dd_x; \\
(\ss_2)_x' & = a_{1,2} \EE_{(a_{1,2}, a_{2,2})}  \dd_y; \\
(\ss_2)_y' & = \EE_{(a_{1,2},a_{2,2})}[(\dd_y)_y' + a_{2,2}\dd_y].
\end{split}
\end{equation*}
Substituting these values into \eqref{ss_1} and \eqref{equ:jacobian_with_sij},
we obtain \ref{item:theorem_explicit_formulas(iii)}.
\vskip 0.1cm
\noindent \textbf{Step 2. \ref{item:theorem_explicit_formulas(iii)}} yields the basic sequence of the pair $(\ss_1, \ss_2)$.
Let, for $m, n \ge 0$,
$$
s_{m,n}(x,y) = T_{m,n}(x^m y^n).
$$
By induction, it is enough to show that $s_{m,n}$  satisfies the following:
\begin{enumerate}[label={\rm(\alph{*})}]
\item\label{item:properties_of_smn(a)} $s_{0,0}(x,y) = 1$;
\item\label{item:properties_of_smn(b)} $s_{m,n}(0,0) = 0$ if $m\geq 1$ or $n\geq 1$;
\item\label{item:properties_of_smn(c)} $\ss_1(s_{m,n}) = ms_{m-1,n}$, $\ss_2(s_{m,n}) = ns_{m,n-1}$, and $\ss_1(s_{0,n}) = \ss_2(s_{m,0}) = 0$ for all $m, n \ge 1$.
\end{enumerate}
%
%
\noindent
{\bf Proof of \ref{item:properties_of_smn(a)}.} All the operators that appear in Eq.~\eqref{equ:operator_Tmn} commute with each other. Accordingly, we compute that
$\dd_x(1) = \dd_y(1) = 0$, while $(\dd_x)_x' = \LL_x +D_x(\LL_x)_x'$ and $(\dd_y)_y' = \LL_y +D_y(\LL_y)_y'$. It follows that
$$
s_{0,0}(x,y) = \LL_x^{-1}  \LL_x \LL_y^{-1} \LL_y(1) = 1.
$$
{\bf Proof of \ref{item:properties_of_smn(c)}.}
Notice that $\ss_1 = D_x\LL_x\EE_{(a_{1,1},a_{2,1})}$  commutes with all the operators involved in
the definition of $s_{m,n}$ (but not with $x^m$ or $y^n$). Therefore, we find that, for all $m, n \in \mathbf N$ such that $m+n \ge 1$, $\mathfrak s_1(s_{m,n}(x,y))$ is equal to
\begin{equation*}
\left|
\begin{array}{cc}
(\dd_x)_x' +a_{1,1}\dd_x  &  a_{2,1}\dd_x\\
a_{1,2}\dd_y  &  (\dd_y)_y' +a_{2,2}\dd_y
\end{array}
\right| \nonumber
\EE_{(a_{1,1}, a_{2,1})}^{-m+1}\EE_{a_{1,2},a_{2,2}}^{-n}
\LL_x^{-m} \LL_y^{-n} (mx^{m-1} y^n),
\end{equation*}
that is, $\ss_1 (s_{m,n}(x,y)) = m T_{m-1,n}(x^m y^n) = ms_{m-1,n}(x,y)$.
In a similar way, we see that $\ss_2(s_{m,n}) = ns_{m,n-1}$ for all $m,n \in \mathbb N$ with $m+n \ge 1$.
\vskip 0.1cm
\noindent {\bf Proof of \ref{item:properties_of_smn(b)}.} For all $m, n \geq 0$, we have from the definition of $s_{m,n}$
and the commutativity of shift-invariant operators that
\begin{equation} \label{proof-of-b}
s_{m,n}(x,y) =
 \EE_{(a_{1,1}, a_{2,1})}^{-m}\EE_{(a_{1,2},a_{2,2})}^{-n}(\mathcal H_{m,n}(x,y))
\end{equation}
where the polynomial $\mathcal H_{m,n}(x,y)$ is given by the $2 \times 2$ determinant
\begin{equation*}
\left|
\begin{array}{cc}
\!(\dd_x)_x'\LL_x^{-m-1}(x^m) +a_{1,1}D_x \LL_x^{-m}(x^m)  &  a_{2,1}\LL_x^{-m}D_x (x^m)\!  \\

\! a_{1,2}\LL_y^{-n}D_y (y^n)   &  (\dd_y)_y'\LL_y^{-n-1}(y^n) + a_{2,2}D_y\LL_y^{-n}(y^n)\!
\end{array}
\right|\!.
\end{equation*}
Now we combine the previous equations with the univariate formulas  \eqref{pincherle equation} and  \eqref{power equation}, which we use in the form
$$
(\dd_x)'_x \LL^{-m-1}(x^m) = x\LL^{-m}(x^{m-1}).
$$
If $m \geq 1$ and $n \geq 1$, we get
\small
\begin{equation*}
\mathcal H_{m,n}(x,y) = \left|
\begin{array}{cc}
\!x \LL_x^{-m}(x^{m-1}) +ma_{1,1} \LL_x^{-m}(x^{m-1}) &  ma_{2,1}\LL_x^{-m} (x^{m-1})  \!\\
\!na_{1,2}\LL_y^{-n} (y^{n-1})   &  y\LL_y^{-n}(y^{n-1}) + na_{2,2}\LL_y^{-n}(y^{n-1}) \!
\end{array}
\right|\!,
\end{equation*}
\normalsize
which ultimately leads to
\begin{equation*}
s_{m,n}(x,y) = \EE_{(a_{1,1}, a_{2,1})}^{-m}\EE_{(a_{1,2},a_{2,2})}^{-n}
\left|
\begin{array}{cc}
x +ma_{1,1}  &  ma_{2,1}  \\
na_{1,2}   &  y+ na_{2,2}
\end{array}
\right|
\LL_x^{-m} \LL_y^{-n}
(x^{m-1} y^{n-1}).
\end{equation*}
In these computations, the monomial $x^m$ (respectively, $y^n$) must stay
to the right of any operator acting on $x^m$ (respectively, on $y^n$).

Applying the shifts, and taking into account that $\EE^{-m}_{(a_{1,1},a_{2,1})} = \EE_{(-ma_{1,1},-ma_{2,1})}$ and $\EE_{(a_{1,2},a_{2,2})}^{-n} = \EE_{(-na_{1,2},-na_{2,2})}$, we obtain that $s_{m,n}(x,y)$ equals
\small
\begin{equation*}
\left|
\begin{array}{cc}
x -na_{1,2}  &  ma_{2,1}  \\
na_{1,2}   &  y - ma_{2,1}
\end{array}
\right|
\LL_x^{-m} \LL_y^{-n}(
(x-ma_{1,1}-na_{1,2})^{m-1} (y-ma_{1,1} -na_{2,2})^{n-1}),
\end{equation*}
\normalsize
which simplifies to give
\begin{equation}
\label{bivariate after shifts}
s_{m,n}(x,y) = \left|
\begin{array}{cc}
x -x_{0,n} &  y_{m,0}  \\
x_{0,n}   &  y - y_{m,0}
\end{array}
\right|
\LL_x^{-m} \LL_y^{-n}((x-x_{m,n})^{m-1}   (y-y_{m,n})^{n-1}).
\end{equation}
Evaluating at $(x,y) = (0,0)$,  we get $s_{m,n}(0,0) = 0$.

On the other hand, if one of $m$ and $n$ is zero, say $m \geq 1$ and $n=0$,
then, starting from Eq.~\eqref{proof-of-b},
using \eqref{pincherle equation} and  \eqref{power equation} only on $x$, and recalling that $(\dd_y)'_y= \LL_y+D_y(\LL_y)'_y$, we have
\small
\begin{equation} \label{n=0}
\begin{split}
s_{m,0}(x,y) & =
\EE_{(a_{1,1}, a_{2,1})}^{-m}
\left|
\begin{array}{cc}
\!x\LL_x^{-m}(x^{m-1}) +a_{1,1} m  \LL_x^{-m}(x^{m-1})  &  a_{2,1}m \LL_x^{-m} (x^{m-1})\!  \\
0   &   1
\end{array}
\right|  \\
&=   \EE_{(a_{1,1},a_{2,1})}^{-m} [x+ma_{1,1}]\LL_x^{-m}(x^{m-1}) \\
&=  x\LL_x^{-m}  ((x-ma_{1,1})^{m-1}).
\end{split}
\end{equation}
\normalsize
Evaluating at $(0,0)$ yields $s_{m,0}(0,0) = 0$. Similarly for $m=0$ and $n \geq 1$.  This confirms \ref{item:properties_of_smn(b)}.

\noindent \textbf{Step 3.} We prove \ref{nodes}.
We  have from Eq.~\eqref{power equation} that
$$
\LL_x^{-m}(x^{m-1}) = \frac{p_m(x)}{x}\ \ \text{and}\ \ \LL_y^{-n}(y^{n-1}) = \frac{q_n(y)}{y}.
$$
These, together with Eqs.~\eqref{bivariate after shifts} and \eqref{n=0},
yields \ref{nodes}.
\end{proof}
\noindent
By way of example, we now consider some special cases of the general formulas implied by Theorem \ref{bivariate Abel}.
%
\begin{example}
If $A$ is the identity matrix, then the basic sequence of the pair $(\dd_x, \dd_y)$ is $\{p_m(x)q_n(y)\}_{(m,n) \in \mathbb{N}^2}$, and
the delta Gon\v{c}arov polynomials are
$$
t_{m,n}((x,y);Z)=\frac{x p_m(x-m)}{x-m} \cdot \frac{y q_n(y-n)}{y-n} .
$$
\end{example}
\begin{example}
For the classical Abel polynomials, where $\dd_x=D_x$ and $\dd_y=D_y$, we have
$p_m(x)=x^m$ and $q_n(y)=y^n$. Hence Theorem \ref{bivariate Abel}\ref{nodes} provides the same formula \eqref{classical} obtained in \cite{[Lorentz-yan]}.
\end{example}
%
\begin{example}
Forward differences are given by the delta operator $\dd = \EE_1 -\mathfrak{I} = D\LL$, where
$\LL f(x) = \int_x^{x+1}f(t)dt$ is the Bernoulli operator. The $n$-th basic pol\-y\-no\-mi\-al for this operator
is the lower factorial function
$$
p_n(x) = (x)_{(n)}=x(x-1) \cdots (x-n+1).
$$
So, if the delta operators $\dd_x$ and $\dd_y$ are forward difference operators relative to $x$ and $y$ respectively,  then the delta Abel polynomial $t_{m,n}((x,y);Z)$ is equal to
$$
\left[ (x-x_{0,n})(y-y_{m,0}) - x_{0,n}y_{m,0}\right]
\cdot (x-x_{m,n}-1)_{(m-1)}(y-y_{m,n}-1)_{(n-1)}.
$$
%
%
Similarly, backward differences are given by the delta operator $\dd = \mathfrak{I}-\EE_{-1} = D\LL$,
with $\LL f(x) =\int_{x-1}^x f(t) dt$. The $n$-th basic polynomial associated to this operator is the upper factorial function
$$
p_n(x) = (x)^{(n)}=x(x+1) \cdots (x+n-1).
$$
So, if the delta operators $\dd_x$ and $\dd_y$ are, respectively, backward difference operators relative to $x$ and $y$,
then the delta Abel polynomial $t_{m,n}((x,y);Z)$ equals
$$
\left[ (x-x_{0,n})(y-y_{m,0}) - x_{0,n}y_{m,0}\right]
\cdot (x-x_{m,n}+1)^{(m-1)}(y-y_{m,n}+1)^{(n-1)}.
$$
\end{example}
%
%
%
\section{Closed formulas for multivariate delta Abel polynomials}
In this section we give closed formulas for those multivariate delta Abel polynomials whose system of delta operators $(\dd_1, \dd_2, \dots, \dd_d)$ is separable, which means that, for each $i = 1,\ldots, d$,
\begin{equation}
\dd_{i}
= D_{i} \sum_{j=0}^\infty a_j^{(i)}D_{i}^j
= D_{i}  \LL_{i},
\label{delta d-tuple}
\end{equation}
for some real coefficients $a_0^{(i)}, a_1^{(i)}, \ldots$ with $a_0^{(i)} \neq 0$.

Let $\{p_n^{(i)} \}_{n \in \mathbb N}$ be the  basic sequence
of $\dd_i$. Note that $p_n^{(i)}$ is only a function of the variable $x_i$
and that the delta operator $\dd_{i}$
contains only partial derivatives with respect to $x_i$.
The grid $Z$ is a linear transformation of $\mathbb{N}^d$ by a certain
$d\times d$ matrix $A=(a_{i,j})$.
%
%
%
\begin{theorem} \label{general-closed}
Let the system of delta operators $(\dd_1, \dd_2, \dots, \dd_d)$, the matrix  $A$,  and  the grid
$Z$ be defined as above.
Let $B$ be the $d \times d$ diagonal matrix with
\begin{equation} \label{matrix-B}
B_{i,i} = x_i - z_{\mathbf{n},i}
 \qquad (1\leq i \leq d),
\end{equation}
and $C$ the $d \times d$ matrix whose $(i,j)$-entry is given by
\begin{equation} \label{matrix-C}
C_{i,j} = z_{n_i \mathbf{e_i}, j},
\end{equation}
where, for a vector $\mathbf{k} \in \mathbb{N}^d$, $z_{\mathbf{k}, j}$ is the $j$-th entry of the point $z_{\mathbf{k}}$.
Moreover, let $F$ be the $d\times d$ diagonal matrix of operators with
\begin{equation} \label{matrix-F}
F_{i,i} = (\dd_i)'_{x_i}
\qquad (1 \leq i \leq d),
\end{equation}\label{matrix-G}
and $G$ the $d \times d$ matrix of operators whose $(i,j)$-entry is $G_{i,j} = a_{i,j} \dd_i$.

Denote by $(t_{\mathbf{n}}(\bbx; Z))_{\mathbf n \in \mathbb N^d}$ the sequence of delta Abel polynomials
 associated with $(\dd_1, \dd_2, \dots, \dd_d)$ and the grid $Z$, and let $\mathbf n = (n_1, n_2, \ldots, n_d) \in \mathbf N^d$ be fixed. The following statements hold:
\begin{enumerate}[label={\rm(\roman{*})}]
\item We have
\begin{equation} \label{general-close1}
t_{\mathbf{n}}(\mathbf{x}; Z) =
\det(B+C)
\prod_{i=1}^d
\frac{ p^{(i)}_{n_i}(x_i-z_{\mathbf{n},i}) }
{x_i-z_{\mathbf{n},i}}.  
\end{equation}

\item Let $\ss_i= \dd_i \EE_{\mathbf{t}_i}$, where $\mathbf{t}_i$ is the $i$-th column vector of the matrix $A$. Then
\begin{equation} \label{general-close2}
t_{\mathbf{n}}(\mathbf{x}; Z) =
J((\mathbf{\ss})_{\mathbf{x}}')
\left(\prod_{i=1}^d \LL_{i}^{-n_i-1} \right)
\left( \prod_{i=1}^d \EE_{\mathbf{t}_i}^{-n_i-1} \right)  \mathbf{x}^{\mathbf{n}},
\end{equation}
where $J((\mathbf{\ss})_{\mathbf{x}}')$ is the Jacobian matrix whose $(i,j)$-entry is $(\ss_i)'_{x_j}$.

\item We have
\begin{equation}\label{general-close3}
t_{\mathbf{n}}(\mathbf{x}; Z) =
\det(F+G)
\left(\prod_{i=1}^d \LL_{i}^{-n_i-1} \right)
\left(\prod_{i=1}^d \EE_{\mathbf{t}_i}^{-n_i} \right)
 \mathbf{x}^{\mathbf{n}}.
\end{equation}
\end{enumerate}
\end{theorem}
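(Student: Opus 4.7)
The strategy is the direct multivariate analogue of Theorem \ref{bivariate Abel}: carry out three steps. Step 1: show (ii) and (iii) are equivalent via a Pincherle-derivative computation. Step 2: verify that the right-hand side of (iii) is the basic sequence of the system $(\ss_1,\dots,\ss_d)$, so by Proposition \ref{shift-prop} it must coincide with $t_{\mathbf{n}}(\mathbf{x}; Z)$. Step 3: expand the operator determinant and apply the univariate Pincherle identities \eqref{pincherle equation} and \eqref{power equation} to arrive at (i).

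For Step 1, separability forces $(\dd_i)'_{x_j} = 0$ for $j\neq i$, while $(\EE_{\mathbf{t}_i})'_{x_j} = a_{j,i}\,\EE_{\mathbf{t}_i}$ since $\mathbf{t}_i$ is the $i$-th column of $A$. The product rule for Pincherle derivatives then yields $(\ss_i)'_{x_i} = [(\dd_i)'_{x_i} + a_{i,i}\dd_i]\EE_{\mathbf{t}_i}$ and $(\ss_i)'_{x_j} = a_{j,i}\dd_i\EE_{\mathbf{t}_i}$ for $j\neq i$. Factoring $\EE_{\mathbf{t}_i}$ out of row $i$ of $J((\mathbf{\ss})'_{\mathbf{x}})$ gives $\prod_i \EE_{\mathbf{t}_i}$ times a matrix whose determinant equals $\det(F + G)$ (after invoking $\det M = \det M^T$ to reconcile the two conventions for the off-diagonals), so combining with the trailing shifts $\prod_i \EE_{\mathbf{t}_i}^{-n_i - 1}$ of (ii) converts them to the shifts $\prod_i \EE_{\mathbf{t}_i}^{-n_i}$ of (iii).

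For Step 2, let $s_{\mathbf{n}}(\mathbf{x})$ denote the right-hand side of (iii). To check $s_{\mathbf{0}} = 1$: each $\dd_i$ kills $1$, so in $\det(F+G)(1)$ only the diagonal entries $(\dd_i)'_{x_i}(1) = \LL_i(1)$ survive, and these are cancelled by $\LL_i^{-1}$. To verify $\ss_i(s_{\mathbf{n}}) = n_i\, s_{\mathbf{n}-\mathbf{e}_i}$: since $\ss_i = D_i\LL_i\EE_{\mathbf{t}_i}$ is shift-invariant, it commutes with every operator in $s_{\mathbf{n}}$, so push it past them to act directly on $\mathbf{x}^{\mathbf{n}}$; the factor $\LL_i\EE_{\mathbf{t}_i}$ reduces $\LL_i^{-n_i-1}\EE_{\mathbf{t}_i}^{-n_i}$ to $\LL_i^{-n_i}\EE_{\mathbf{t}_i}^{-(n_i-1)}$, while $D_i(\mathbf{x}^{\mathbf{n}}) = n_i\mathbf{x}^{\mathbf{n}-\mathbf{e}_i}$ (which is $0$ when $n_i = 0$), giving exactly $n_i s_{\mathbf{n}-\mathbf{e}_i}$. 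The vanishing $s_{\mathbf{n}}(\mathbf{0}) = 0$ for $|\mathbf{n}| \geq 1$ then falls out of the explicit form derived in Step 3.

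For Step 3, expand $\det(F+G) = \sum_\sigma \sgn(\sigma)\prod_i (F+G)_{i,\sigma(i)}$ and apply each summand to $\prod_j \LL_j^{-n_j-1}(x_j^{n_j})$. Since row $i$ involves only $\dd_i$ and $(\dd_i)'_{x_i}$, it affects only the $x_i$-factor; the identities $(\dd_i)'_{x_i}\LL_i^{-n_i-1}(x_i^{n_i}) = x_i\LL_i^{-n_i}(x_i^{n_i-1})$ and $\dd_i\LL_i^{-n_i-1}(x_i^{n_i}) = n_i\LL_i^{-n_i}(x_i^{n_i-1})$ let us pull $\prod_i \LL_i^{-n_i}(x_i^{n_i-1})$ out of every permutation term, collapsing the operator determinant into a scalar $d\times d$ determinant whose entries match those of $B+C$ up to the conjugation $M' = PM^TP^{-1}$ with $P = \mathrm{diag}(n_1,\dots,n_d)$ (which preserves the determinant). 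Applying the outer shifts $\prod_i \EE_{\mathbf{t}_i}^{-n_i}$ replaces each $x_j$ by $x_j - z_{\mathbf{n},j}$ and turns $\LL_i^{-n_i}(x_i^{n_i-1})$ into $p_{n_i}^{(i)}(x_i - z_{\mathbf{n},i})/(x_i - z_{\mathbf{n},i})$ via \eqref{power equation}, yielding (i). The main obstacle will be the combinatorial bookkeeping in this step: correctly tracking which variable each operator acts on in the expanded determinant, verifying that the resulting off-diagonal scalar entries $n_i a_{j,i}$ assemble into $B+C$ (up to the conjugation above), and handling the boundary cases in which some $n_i = 0$ by collapsing the corresponding row and column, just as in the bivariate Step 2(b).
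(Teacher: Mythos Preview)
Your proposal is correct and follows essentially the same three-step architecture as the paper's proof: Pincherle-derivative computation for (ii)$\Leftrightarrow$(iii), verification of the basic-sequence axioms for (iii), and then the operator-to-scalar determinant reduction yielding (i). You are in fact slightly more careful than the paper in flagging the transpose/conjugation needed to reconcile the off-diagonal entries $a_{j,i}\dd_i$ coming from the Jacobian with the $G_{i,j}=a_{i,j}\dd_i$ convention and to match the resulting scalar matrix with $B+C$; the paper handles these identifications tacitly.
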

\begin{proof}
 We only sketch the proof here since the computations are all similar to those in the bivariate case.
\vskip 0.1cm
\noindent \textbf{Step 1}. We prove that \eqref{general-close2} and \eqref{general-close3} are equivalent
by  calculating  that
\begin{eqnarray*}
 (\ss_i)'_{x_j} = \left\{
  \begin{array}{ll}
    (\dd_i)'_{x_i} \EE_{\mathbf{t}_i} + a_{i,i} \dd_i \EE_{\mathbf{t}_i} & \text{if } i=j \\
    a_{j,i} \dd_i \EE_{\mathbf{t}_i}  & \text{if } i\neq j
  \end{array} \right.\!.
\end{eqnarray*}
Substituting into \eqref{general-close2} we obtain \eqref{general-close3}.
\vskip 0.1cm
\noindent \textbf{Step 2}. We check that Eq. \eqref{general-close3} yields the basic sequence of
the system $(\ss_1, \ss_2, \dots, \ss_d)$, since then this is also the desired
delta Gon\v{c}arov polynomials, by Proposition \ref{shift-prop}.
For that, denote by $s_{\mathbf{n}}(\mathbf{x})$ the right-hand side of
\eqref{general-close3}. By induction, it suffices to prove that:
\begin{enumerate}[label={\rm(\alph{*})}]
\item\label{item:final_(a)}
    $s_{\mathbf{0}}(\bbx) = 1$;
\item\label{item:final_(b)}
    $s_{\mathbf{n}}(0,0) = 0$ if $\mathbf{n}\neq \mathbf{0}$;
\item\label{item:final_(c)}
    $\ss_i(s_{\mathbf{n}}) = n_i s_{\mathbf{n}-\mathbf{e}_i}$ for all $i = 1, \ldots, d$.
\end{enumerate}
Point \ref{item:final_(a)} is immediate, when considering that $\dd_i(1) =D_i(1)=0$ and
$$
(\dd_i)'_{x_i} \LL_i^{-1}(1) = (\LL_i +D_i (\LL_i)'_{x_i} )\LL_i^{-1}(1)= 1.
$$
As for \ref{item:final_(b)}, we use again Equations ~\eqref{pincherle equation} and \eqref{power equation}
to get $(\dd_i)'_{x_i} \LL_i^{-n_i-1} x_i^{n_i} = x_i \LL_i^{-n_i} x^{n_i-1}$. Hence if $n_i \geq 1 $ for all $i$,
$s_{\mathbf{n}}(\mathbf{x})$ can be expressed as
 \begin{equation*}
 s_{\mathbf{n}}(\mathbf{x}; Z)=
 \left(  \prod_{i=1}^d \EE_{\mathbf{t}_i}^{-n_i} \right)
  \det(K(\bbx)) \prod_{i=1}^d \LL_{i}^{-n_i} (x_i^{n_i-1}),
  \end{equation*}
where $K(\bbx)$ is the $d \times d$ matrix with entries
\begin{equation*}
 k_{i,j} =\left\{
 \begin{array}{ll}
   x_i + n_i a_{i,i} & \text{if } i=j \\
   n_i a_{j,i}   & \text{if }  i \neq j
 \end{array} \right.\!.
\end{equation*}
Applying the shift operators first, we have
\begin{eqnarray} \label{Matrix M}
 s_{\mathbf{n}}(\mathbf{x})= \det(M(\bbx)) \prod_{i=1}^d \left(\LL_{i}^{-n_i} ((x_i-z_{\mathbf{n},i})^{n_i-1}) \right) ,
\end{eqnarray}
where $M(\bbx)$ is the $d \times d$ matrix with entries
\begin{equation*}
 m_{i,j} =\left\{
 \begin{array}{ll}
  x_i + n_i a_{i,i}-\sum_{j=1}^d n_j a_{i,j} & \text{if } i=j \\
   n_i a_{j,i}   & \text{if }  i \neq j
 \end{array} \right.\!.
\end{equation*}
When $\bbx=\mathbf{0}$, $\det (M)=0$ since every column of $M(\mathbf{0})$ has sum zero. Hence $s_\mathbf{n}(\mathbf{0})=0$ whenever
$\mathbf{n}$ has no zero entry.

For the case that some $n_i=0$ but $\mathbf{n} \neq \mathbf{0}$, the computation  reduces to an equivalent one at a lower dimension
and with an index vector whose entries are all positive.
Since the above argument applies to all dimensions $d \geq 1$, we conclude
$s_\mathbf{n}(\mathbf{0})=0$  for all $\mathbf{n} \neq \mathbf{0}$.

As for \ref{item:final_(c)}, note that  $\ss_i=D_i \LL_i \EE_{\mathbf{t}_i}$ commutes with all operators in the definition of $s_{\mathbf{n}}$.
Hence we can compute  $\ss_i(s_{\mathbf{n}})$ by applying $\ss_1$ to $\mathbf{x}^\mathbf{n}$ first, then applying the other operators
in the formula of $s_{\mathbf{n}}$. This yields exactly the formula of $n_i s_{\mathbf{n}-\mathbf{e_i}}$.
\vskip 0.1cm
\noindent \textbf{Step 3}.  We derive Eq.~\eqref{general-close1} from \eqref{general-close3}.
Notice that in terms of $\mathbf{n}$ and the matrix $A=(a_{i,j})$, Eqs.~\eqref{matrix-B} and \eqref{matrix-C} can be
re-written as
$$
 B_{i,i}= x_i - \sum_{j=1}^d n_j a_{i, j}, \qquad
 C_{i,j} = n_i a_{j,i}.
 $$
Hence the matrix $M$  in \eqref{Matrix M} is exactly $B + C$.
Formula \eqref{general-close1} is then obtained by using the equation
$$
 \LL_i^{-n_i}((x-z_{\mathbf{n},i})^{n_i-1}) = \frac{ p^{(i)}_{n_i}(x_i-z_{\mathbf{n},i})}{x_i-z_{\mathbf{n},i}}.
$$
This completes our proof.
\end{proof}

\begin{remark}
Given a grid $Z \subseteq \mathbb R^d$ and a vector $\mathbf{v} \in \mathbb R^d$, let
$
Z+ \mathbf{v} =\{ z_\mathbf{k} + \mathbf{v} : z_\mathbf{k} \in Z\}$.
It is easy to verify that the delta Gon\v{c}arov polynomials are translation invariant, i.e., they satisfy
$t_\mathbf{n}(\bbx +\mathbf{v}; Z+\mathbf{v})= t_\mathbf{n}(\bbx; Z)$.

As a consequence, if $Z'$ is an affine transformation of $\mathbb{N}^d$, i.e., $Z'=\{ z'_{\mathbf{k}} = \mathbf{v} + A \mathbf{k}:
 \mathbf{k} \in \mathbb{N}^d\}$ for some $d \times d$ matrix $A$, then Theorem \ref{general-closed} yields a closed formula for the delta
 Gon\v{c}arov polynomials associated with the grid $Z'$ by the equation
 $  t_\mathbf{n}(\bbx; Z') = t_{\mathbf{n}}(\bbx-\mathbf{v}; Z)$,
where $Z=\{z_\mathbf{k}: \mathbf{k} \in \mathbb{N}^d, z_{\mathbf{k}}=A\mathbf{k} \}$.
\end{remark}
\begin{remark}
In \cite[Theorem (4.3)]{[Lorentz-yan]} a formula was given for the classical trivariate Abel polynomials.
 It was noted that the formula there was not valid for all linear matrices and suggested the  question
of  finding a  closed general formula for the classical Abel polynomials in three variables.

Since in the classical case, the system of delta operators is separable,  Eq.~\eqref{general-close1}
gives a complete answer for this question.
In dimension 3  let the variables be $x, y, z$, then \eqref{general-close1}  specializes to
\small
\begin{equation}\label{trivariate Abel with nodes}
\begin{split}
t_{m,n,p}((x,y,z); Z) & = \left|
\begin{array}{ccc}
x -x_{0,n,p} & y_{m,0,0} & z_{m,0,0}\\
x_{0,n,0} & y-y_{m,0,p} & z_{0,n,0} \\
x_{0,0,p} & y_{0,0,p} & z -z_{m,n,0}
\end{array}
\right| \cdot \\[0.8ex]
& \cdot \frac{ p_{m}(x-x_{m,n,p})}{x-x_{m,n,p}} \cdot
\frac{q_{n}(y-y_{m,n,p})}{y-y_{m,n,p}} \cdot
\frac{r_{p}(z-z_{m,n,p})}{z-z_{m,n,p}},
\end{split}
\end{equation}
\normalsize
where $\{ p_m\}$, $\{ q_n\}$ and $\{ r_p\}$ are the basic sequences of the univariate
operators $\dd_x$, $\dd_y$ and  $\dd_z$ respectively.
 When  $(\dd_x, \dd_y, \dd_z)=(D_x, D_y, D_z)$,
using  $p_m(x)=x^m$, $q_n(y)=y^n$ and $r_p(z)=z^p$, we obtain from  \eqref{trivariate Abel with nodes}
a complete formula for the classical trivariate Abel polynomials.
\end{remark}
\section*{Acknowledgments}
We are grateful to an anonymous referee  for reading the manuscript carefully and helping us improve the presentation of the paper.
This publication was made possible by NPRP grant No. [5-101-1-025] from the Qatar National
Research Fund (a member of Qatar Foundation). The statements made herein are
solely the responsibility of the authors.
%
%


\begin{thebibliography}{99}
%
%
\bibitem{abel}
N.H. Abel, {Sur les fonctions g\'{e}n\'{e}ratrices et leur d\'{e}terminantes}, in: B.M. Holmboe (Ed.), Oeuvres compl\`etes de N.H. Abel, math\'ematicien, avec des notes et d\'eveloppements, Tome II, Chr. Grond\"ahl, Christiania, 1839, pp. 77--88.
%
\bibitem{Garsia}
A.M. Garsia,
{An Expos\'e of the Mullin-Rota Theory of Polynomials of Binomial Type},
Linear Multilinear Algebra
1 (1) (1973) 47--65.
%
\bibitem{Garsia-Joni}
A.M. Garsia, S.A. Joni,
\textrm{Higher dimensional polynomials of binomial type and formal power series inversion},
Comm. Algebra 6 (12) (1978) 1187--1215.
%
\bibitem{Gonc30}
W. Gon\v{c}arov,
\textrm{Recherches sur les d\'eriv\'ees successives des fonctions analytiques.
Etude des valeurs que les d\'eriv\'ees prennent dans une suite de domaines},
Bull. Acad. Sci. URSS S\'er. VII (1) (1930) 73--104.
%
\bibitem{[Gon]}
V.L. Gon\v{c}arov,
Theory of interpolation and approximation of functions,
Gosudarstv Izdat Tehn-Teor Lit., Moscow, 1954.
%
%
\bibitem{JiaSharma}
R.-Q. Jia, A. Sharma,
{Solvability of some multivariate interpolation problems},
J. Reine Angew. Math. 421 (1991) 73--81.
%
%
\bibitem{Joni78}
S.A. Joni,
{Lagrange Inversion in Higher Dimensions and Umbral Operators},
Linear Multilinear Algebra
6 (2) (1978) 111--121.
%

\bibitem{[Khare-lorentz-yan]}
N. Khare, R. Lorentz, C.H. Yan,
{Bivariate Gon\v{c}arov polynomials and integer sequences},
Sci. China Math. 57 (8) (2014) 1561--1578.
%
\bibitem{[Kung]}
J.P.S. Kung,
{A Probabilistic Interpretation of the
Gon\v{c}arov and Related Polynomials},
J. Math. Anal. Appl. 79 (2) (1981) 349--351.

\bibitem{[KSY]}
J.P.S.~Kung, X. Sun, C.H.~Yan,
{Gon\v{c}arov-Type Polynomials and Applications in Combinatorics},
preprint, 2006. (Available at the url \url{http://www.math.tamu.edu/~cyan/Files/DGP.pdf}.)
%

\bibitem{[KY03]}
J.P.S. Kung, C.H.~Yan,
{Gon\v{c}arov polynomials and parking functions},
{J. Combin. Theory Ser. A}
102 (1) (2003) 16--37.
%
%
\bibitem{GloRlo}
G. Lorentz, R. Lorentz,
{Multivariate Interpolation},
in: P. Graves-Morris et al. (Eds.), Rational Approximation and Interpolation, Lecture Notes in Math. 1105,
Springer-Verlag, Berlin, 1984, pp.~136--144.
%
\bibitem{[Lorentz-tringali-yan]}
R. Lorentz, S. Tringali, C.H. Yan,
{Generalized Gon\v{c}arov polynomials},
submitted, 2015. (Available at the url \url{http://arxiv.org/abs/1511.04039}.)
%

\bibitem{Lo92}
R.A. Lorentz,
{Multivariate Birkhoff interpolation},
Lecture Notes in Math. 1516,
Springer-Verlag, Berlin, 1992.
%
\bibitem{[Lorentz-yan]}
R.A. Lorentz, C.H. Yan,
{Bivariate Affine Gon\v{c}arov polynomials},
 Discrete Math. 339 (2016) 2371--2383.
%
\bibitem{Mullin-Rota}
R. Mullin, G.-C. Rota, {On the Foundations of Combinatorial Theory. III. Theory of Binomial Enumeration},
 in: B. Harris (Ed.), {Graph Theory and Its Applications}, Academic Press, New York, 1970, pp.~167--213.
%

\bibitem{Niederhausen}
H. Niederhausen,
{Finite Operator Calculus With Applications to Linear Recursions}, 2010.
(Available at the url
\url{http://math.fau.edu/Niederhausen/HTML/Research/UmbralCalculus/bookS2010.pdf}.)

%
\bibitem{parrish}
C. Parrish,
{Multivariate umbral calculus},
Linear Multilinear Algebra
6 (2) (1978) 93--109.
%
\bibitem{Reiner}
D.L. Reiner,
{Multivariate Sequences of Binomial Type},
Studies Appl. Math. 57 (2) (1977), 119--133.
%
\bibitem{Roman}
S.M. Roman,
{The Algebra of Formal Series III: Several Variables},
J. Approx. Theory  26 (4) (1979) 340--381.
%
%
\bibitem{RoKaOd73}
G.-C. Rota, D. Kahaner, A. Odlyzko,
{On the Foundations of Combinatorial Theory. VIII. Finite Operator Calculus},
J. Math. Anal. Appl. 42 (3) (1973) 684--760.
%
\bibitem{Rota}
G.-C. Rota, D. Kahaner, A. Odlyzko,
{Finite Operator Calculus}, Academic Press, New York, 1975.
%
\bibitem{Verde-Star}
L. Verde-Star,
{Dual Operators and Lagrange Inversion in Several Variables},
Adv. Math. 58 (1) (1985), 89--108.
%
\bibitem{[Yan14]}
C.H. Yan, Parking Functions, in: M. B\'ona (Ed.),
{Handbook of Enumerative Combinatorics},
Discrete Math. Appl., CRC Press, Boca Raton, FL, 2015, pp.~835--893.
%
%
\end{thebibliography}
\end{document}